\documentclass[12pt,centertags,oneside, reqno]{amsart}
\usepackage{ifpdf}

\usepackage{amssymb}
\usepackage{braket}
\usepackage[all]{xy}
\usepackage{cancel}

\usepackage{amscd,amsxtra,calc}
\usepackage{cmmib57}
\usepackage{url}
\usepackage{ulem}
\usepackage{xcolor}

\usepackage{tikz}
\usepackage{array, longtable}

\usepackage[a4paper,width=16.2cm,top=3cm,bottom=3cm]{geometry}

\numberwithin{equation}{section}

\theoremstyle{plain}
\newtheorem{thm}{Theorem}[section]
\newtheorem{theorem}[thm]{Theorem}

\newtheorem{lemma}[thm]{Lemma}
\newtheorem{corollary}[thm]{Corollary}
\newtheorem{proposition}[thm]{Proposition}
\theoremstyle{definition}
\newtheorem{remark}[thm]{Remark}

\numberwithin{equation}{section}

\newcommand{\sS}{{\mathcal S}}

\newcommand{\C}{{\mathbb C}}

\newcommand{\F}{{\mathbb F}}

\renewcommand{\H}{{\mathbb H}}

\renewcommand{\P}{{\mathbb P}}
\newcommand{\Q}{{\mathbb Q}}
\newcommand{\R}{{\mathbb R}}

\newcommand{\Z}{{\mathbb Z}}

\newcommand{\Aut}{{\rm Aut\hspace{.1ex}}}

\title[Fermat K3]{Sixteen generators of the automorphism group of the Fermat quartic surface}

\author{JongHae Keum, Keiji Oguiso, Xun Yu}

\address{HCMC, Korea Institute for Advanced Study, Seoul 02455,
Republic of Korea}
\email{jhkeum@kias.re.kr}

\address{Mathematical Sciences, the University of Tokyo, Meguro Komaba 3-8-1,
Tokyo, Japan, and National Center for Theoretical Sciences,
Mathematics Division, National Taiwan University,
Taipei, Taiwan}
\email{oguiso@ms.u-tokyo.ac.jp}

\address{Center for Applied Mathematics and KL-AAGDM, Tianjin University, Weijin Road 92, Tianjin 300072, P.R. China}
\email{xunyu@tju.edu.cn}

\thanks{JongHae Keum is partially supported by  NRF of Korea (RS-2022-NR068993), Keiji Oguiso is partially supported by JSPS Grant-in-Aid (A) 25H00587, 25K21992 and NCTS scholar program. Xun Yu is partially supported by NSFC (No. 12071337).}
\dedicatory{}

\subjclass[2010]{Primary 14J28.}

\begin{document}

\maketitle

\begin{abstract}
It has been a long standing open problem to find generators of the automorphism group of the complex Fermat quartic K3 surface; in fact, even an explicit number of generators has not been known before. In this paper, we provide the first solution to this problem, presenting $16$ geometric generators of finite order for this group explicitly.  \end{abstract}

\section{Introduction}\label{sect0}

We work over the complex number field $\C$. By a K3 surface, we mean a smooth projective surface with no non-zero global holomorphic one form and with a nowhere vanishing global holomorphic two form.

\medskip

Let $V$ be a K3 surface. Finite groups acting on K3 surfaces are fairly well-understood (\cite{Mu88}, \cite{Ko98a}, \cite{BH23}, and so on). Based on the Torelli theorem for algebraic K3 surfaces (\cite{PS71}), Sterk \cite{St85} shows that the full automorphism group $\Aut\, (V)$ of $V$ is always finitely generated as a group. However, explicit generators are not known except for a very few cases, namely, the case where $\Aut\, (V)$ is finite or virtually cyclic (e.g. in the case where $\rho (V) \le 2$), the two most algebraic K3 surfaces $X_3$ and $X_4$ by Vinberg \cite{Vi83}, Kummer surfaces associated to generic genus two curves by Kondo \cite{Ko98b}, Kummer surfaces of certain product type by Kondo and the first author \cite{KK01}, and quartic
Hessian surfaces by Dolgachev and the first author \cite{DK02}.

\medskip

Segre \cite{Sg44} proved that the Fermat quartic surface 
$$F_4^2 := (x_0^4+ x_1^4 + x_2^4+ x_3^4 = 0) \subset \P_{\C}^3$$
has infinitely many discrete automorphisms. It was the first example of an algebraic surface with infinite discrete automorphism group. Since then, it has been a long standing open problem to find generators of the automorphism group of the Fermat quartic K3 surface $F_4^2$. (See Remark \ref{rem1} for the automorphism groups of other Fermat hypersurfaces.) Indeed, Shimada, an expert in this subject, applied an algorithm based on Borcherds method to this problem, and wrote ''the computation is very heavy, and we could not finish the calculation'' (\cite[Page 11964]{Sm15}, see also \cite[Section 5, Intractable Examples]{Sm14}).

\medskip

In this paper, we provide the first explicit solution to this problem, $16$ generators for the automorphism group, $15$ involutions and one of order $4$ (see also Remark \ref{rmk:order4}).  Our main result is the following:

\medskip

\begin{theorem}\label{main1}
The automorphism group ${\rm Aut}(F_4^2)$ is generated by the sixteen geometrically explicit automorphisms of finite order listed in Table \ref{tab:16generators}.
  \end{theorem}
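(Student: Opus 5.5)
The plan is to use the Torelli theorem together with a fundamental-domain (Borcherds/Vinberg-type) argument on the nef cone. The Fermat quartic $F_4^2$ is a singular K3 surface: $\rho = 20$ and transcendental lattice $T \cong \langle 8\rangle \oplus \langle 8\rangle$. Its Néron–Severi lattice $S = NS(F_4^2)$ is generated by the $48$ lines on the surface.

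We have the exact sequence
$$1 \to \Aut(F_4^2)_{s} \to \Aut(F_4^2) \to \mu_4 \to 1,$$
where $\Aut(F_4^2)_s$ is the symplectic part. The image is all of $\mu_4$, realized for instance by $x_0 \mapsto \sqrt{-1}\,x_0$. It therefore suffices to generate $\Aut(F_4^2)_s$ together with one non-symplectic automorphism of order $4$; this accounts for the single order-$4$ generator in Table \ref{tab:16generators}. By Torelli, $\Aut(F_4^2)$ is identified with the subgroup of $O(S)$ that preserves the nef cone and acts on the discriminant group $A_S \cong A_T$ compatibly with an isometry of $T$.

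The key steps, in order, are as follows.

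\textbf{Step 1: a finite polyhedron.} Choose a natural ample class $h_0$; the obvious choice is the hyperplane class $h$, which is invariant under the projective automorphism group $G = (\mu_4^4/\mu_4) \rtimes S_4$ of order $1536$. Consider the chamber
$$D = \{x \in \mathrm{Nef}(F_4^2) : (x,h) \le (x, g^*h) \text{ for all } g \in \Aut(F_4^2)\}.$$
More concretely, I would construct an explicit rational polyhedral cone $D_0$ with $h$ in its interior, bounded by two kinds of walls: walls $(-2)$-orthogonal to the $48$ lines (genuine nef-cone walls), and bisecting walls $\{x : (x,h) = (x, \iota^* h)\}$ for explicit involutions $\iota$. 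Natural candidates for these involutions are the covering involutions of elliptic fibrations or double-plane projections, for example the Segre-type involutions coming from the pencils of planes through a line, which are defined via pairs of skew lines on the quartic.

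\textbf{Step 2: the standard tessellation argument.} Show that the $\Aut$-translates of $D_0$ (together with $G$-symmetry) cover the nef cone, by the usual Vinberg argument. Take any ample $x$ and choose $g$ in the subgroup $H$ generated by the candidates so that $(g^*x, h)$ is minimal; this minimum exists because $S$ is discrete. Then $g^*x$ satisfies every bisecting inequality, hence lies in $D_0$. Consequently, for any $f \in \Aut(F_4^2)$, some element of $H$ maps $f^*h$ back into $D_0 \cap$ (orbit of $h$). If $h$ is the only element of its orbit in $D_0$, this forces $f \in H \cdot \mathrm{Stab}(h) = H\cdot G$, where $G = \mathrm{Stab}(h)$ is the group of projective automorphisms. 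So $\Aut = \langle H, G\rangle$.

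\textbf{Step 3: reducing to sixteen.} Generate $G$ with few elements: coordinate permutations and sign/$\sqrt{-1}$ scalings, all involutions except the order-$4$ element. Then use conjugation by $G$ to cut the list of wall involutions down to $G$-orbit representatives. Counting the representatives should yield the $15$ involutions plus one element of order $4$.

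\textbf{Main obstacle.} The hard step is Step 1 together with the claim that $D_0$ really is bounded only by line walls and our chosen involution walls. That is, one must verify that every wall of the polyhedron $\{x : (x,h) \le (x, \iota^*h)\} \cap \{(x,\ell) \ge 0\}$ is accounted for, and that no other $(-2)$-curve or automorphism cuts it. This is precisely where Shimada's Borcherds computation blew up. I would try first to exploit the large symmetry $G$ (order $1536$ acting on $S$) and work in $G$-invariant or $G$-orbit terms, reducing the verification to a finite computation of the faces of $D_0$ modulo $G$. For each face I would check that it is either a line wall or mapped across by one of the listed involutions, using the explicit matrices of $\iota^*$ on a basis of lines.
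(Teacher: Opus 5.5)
There is a genuine gap. Your framework is fine: the $\mu_4$ quotient, the Torelli description, and the minimization argument in Step 2. But Steps 2 and 3 are only conditional. Everything rests on the hypothesis that some explicit polyhedron $D_0$, cut out by the $48$ line walls and the bisectors of your chosen involutions, lies in the nef cone and contains no point of $\Aut(F_4^2)\cdot h$ other than $h$.

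You never construct $D_0$ or identify the involutions. The count ``$15$ involutions plus one of order $4$'' is read off from the statement, not derived. There is also no reason to expect the Dirichlet chamber at $h$ to be bounded only by lines and bisectors of $14$ $G$-classes of involutions. Its walls need not come from involutions, and it may have many more $G$-orbits of walls.

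More seriously, the verification you propose cannot certify the hypothesis. Checking that each face of $D_0$ is a line wall or is crossed by a listed involution shows at most that the translates of $D_0$ under $\Gamma:=\langle H,G\rangle$ cover the nef cone, i.e. that $D_0$ is a fundamental domain for $\Gamma$. It says nothing about automorphisms outside $\Gamma$: the bisector cell of $h$ for a proper subgroup can contain further orbit points of the full group. So ``no other automorphism cuts $D_0$'' amounts to the very statement $\Gamma=\Aut(F_4^2)$ you are trying to prove. To break the circularity you would need either an $\Aut$-invariant tessellation known a priori (Borcherds' method, which is the computation Shimada could not finish), or a separate enumeration of all $y\in S$ with $y^2=4$ in $D_0$ showing none is an automorphic image of $h$. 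Likewise, $D_0\subset{\rm Nef}$ requires ruling out every other $(-2)$-curve, for example the conics $h-L_{77}-N_{13}$; you do not address this.

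The missing ingredient is a fundamental domain whose correctness is known independently, plus a reduction procedure that never needs a chamber for $F_4^2$ itself. The paper gets the first from the overlattice ${\rm NS}(F_4^2)=S_3\subset S_2={\rm NS}({\rm Km}(E_{\sqrt{-1}}\times E_{\sqrt{-1}}))$:

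- ${\rm O}'(S_3)$ has index $2^{18}$ in ${\rm O}'(S_2)$, whose tessellation of the positive cone is known from Kondo--Keum (chambers with $624$ walls and stabilizer of order $7680$).
- A double-coset count ($112$ orbits of the chamber stabilizer on the cosets) gives a fundamental domain for ${\rm O}'(S_3)$ as a union of $112$ such chambers. Theorem~\ref{thm:sharperbound} then yields $19361$ generators.
- Dirichlet reduction only replaces a generator by a word containing it exactly once with smaller $h$-degree. Each step therefore preserves the generated group automatically, with no need to know the Dirichlet domain.
- Schreier's lemma passes to the index-$4$ subgroup $W\rtimes{\rm Im}(p_1)$, and a second reduction leaves $17$ generators.
- These $17$ are finally written as words in $p_1(g_1),\dots,p_1(g_{16})$ together with the reflection $r_{[N_{73}]}$, which forces $\langle p_1(g_1),\dots,p_1(g_{16})\rangle={\rm Im}(p_1)$.
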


We include our two weaker results (Theorems \ref{main2}, \ref{main3}).
Consider the Kummer surfaces $$V_n:={\rm Km}(E_{\sqrt{-1}}\times E_{2^{n-2}\sqrt{-1}})\,\, (n\ge 2)$$ of maximal Picard number $20$ and the most algebraic K3 surface of discriminant $4$ $$V_1=X_4.$$ Note that $V_2$ is the Kummer surface studied in \cite{KK01} and $V_3$ is the Fermat quartic surface $F_4^2$ by Shioda \cite[Example 5.2]{Sh79}. We denote by $S_{n}$ the N\'eron-Severi lattice of $V_n$ $$S_n= {\rm NS}(V_n).$$

\begin{theorem}\label{main2} Let
$$l := \frac{(2^{20}-1)(2^{19}-1)}{3} =  183251413675.$$
Then for an odd integer $n\ge 3$, $\Aut\, (V_n)$ has a finite set of generators of cardinality
$$11 \cdot (8l)^{\frac{n-1}{2}} +1.$$
In particular,  ${\rm Aut}(F_4^2)$  has a finite set of generators of cardinality
$$11 \cdot 8\cdot 183251413675 +1 = 16126124403401.$$
\end{theorem}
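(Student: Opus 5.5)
The plan is to induct on odd $n$. The base case is Vinberg's computation for $V_1=X_4$ \cite{Vi83}, and each step $V_n\rightsquigarrow V_{n+2}$ realizes $\Aut(V_{n+2})$ as (isomorphic to) a subgroup of index at most $8l$ in $\Aut(V_n)$. Schreier's lemma then finishes the count: a subgroup of index $m$ in a group with $g$ generators has a generating set of at most $m(g-1)+1$ elements. Starting from a generating set of $12$ elements for $\Aut(V_1)$, extracted from Vinberg's description, one step gives $8l\cdot 11+1$ generators. Since $m(g-1)+1-1=m(g-1)$, iterating $(n-1)/2$ times gives exactly
$$11\cdot (8l)^{\frac{n-1}{2}}+1.$$
The case $n=3$ yields the stated number for $F_4^2=V_3$.

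\textbf{Lattice-theoretic input.} First I would compute the transcendental lattices. Using $T({\rm Km}(A))=T(A)(2)$ and the known $T(E_{\sqrt{-1}}\times E_{k\sqrt{-1}})$, the transcendental lattices $T(V_n)$ should be rank-two diagonal lattices whose discriminants grow by a factor $16$ when $n$ increases by $2$; for example $T(V_1)=\mathrm{diag}(2,2)$ and $T(V_3)=\mathrm{diag}(8,8)$. Via the embedding into the K3 lattice, I would then exhibit $S_{n+2}$ as a sublattice of $S_n$ of $2$-power index, or dually $S_n\subset S_{n+2}\otimes\Q$. This sublattice should be cut out by a condition on $S_n/2S_n$, or on $S_n^*/S_n$ and its $2$-part. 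By the Torelli theorem, $\Aut(V)$ is the group of isometries of ${\rm NS}(V)$ that preserve the nef cone and act on the discriminant group ${\rm NS}(V)^*/{\rm NS}(V)\cong T(V)^*/T(V)$ compatibly with the Hodge isometries $\pm1$ of $T(V)$.

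\textbf{The index step.} I would show that $g\in \Aut(V_{n+2})$ extends (over $\Q$) to an isometry of $S_n$ that preserves the relevant cone and discriminant data, hence lies in $\Aut(V_n)$. Conversely, $\Aut(V_{n+2})$ should be exactly the stabilizer in $\Aut(V_n)$ of the distinguished sublattice $S_{n+2}\subset S_n$, possibly intersected with a finite-index condition on the discriminant action. Its index is then bounded by the size of the $\Aut(V_n)$-orbit of that sublattice. This orbit lies in the set of subspaces of a quadratic space over $\F_2$ of dimension $20$ with prescribed type. Counting those subspaces should produce $(2^{20}-1)(2^{19}-1)/3=l$, which resembles the number of suitable $2$-dimensional isotropic or non-degenerate subspaces. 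An extra factor $\le 8$ would come from the choice of discriminant-form data and signs. I would therefore bound the index by $8l$ by this orbit count, without needing to compute the orbit exactly.

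\textbf{Main obstacle.} The hardest step is the compatibility of nef cones. A $(-2)$-vector of $S_n$ need not lie in $S_{n+2}$, and conversely, so I must check that the chambers match: an isometry of $S_n$ preserving $S_{n+2}$ and the ample cone of $V_n$ preserves the ample cone of $V_{n+2}$, and vice versa, up to the finite ambiguity already absorbed into the factor $8$. I would first try to identify the two positive cones via the $\Q$-isomorphism and compare the Weyl groups. The aim is to show that $W(S_{n+2})$ is normalized by the stabilizer and that the roots of the two lattices define compatible chamber decompositions. If this fails, I would fall back on working with $O^+(S)$ modulo Weyl groups, where Schreier's bound still applies to the quotients $O^+(S)/W(S)\cong \Aut$ once finite-index inclusions are established.
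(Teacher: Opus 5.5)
Your index step has a genuine gap: $\Aut(V_{n+2})$ is not a subgroup of $\Aut(V_n)$, and the chamber compatibility you hope for fails. Under $S_{n+2}\subset S_n$ every root of $S_{n+2}$ is a root of $S_n$, so $W(S_{n+2})\subset W(S_n)$. But $S_n$ has extra roots, e.g.\ $r=x+v_n$ with $x\in U$ and $x^2=2^n-2$, which does not lie in $S_{n+2}$. Since $r^\perp$ is not a $W(S_{n+2})$-wall, it cuts the interior of some $W(S_{n+2})$-chamber. Translating by $W(S_{n+2})$ then gives such a wall through the interior of the nef cone of $V_{n+2}$. So that nef cone is a union of several $W(S_n)$-chambers, and an automorphism of $V_{n+2}$ has no reason to preserve the one identified with the nef cone of $V_n$.

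What does exist canonically is the homomorphism $\Aut(V_{n+2},\omega_{V_{n+2}})\subset{\rm O}''(S_{n+2})\subset{\rm O}''(S_n)\to{\rm O}''(S_n)/W(S_n)\cong\Aut(V_n,\omega_{V_n})$. Its image has finite index, since it equals the image of ${\rm O}''(S_{n+2})=W(S_{n+2})\rtimes\Aut(V_{n+2},\omega_{V_{n+2}})$. Its kernel $\Aut(V_{n+2},\omega_{V_{n+2}})\cap W(S_n)$, however, is not controlled by anything in your argument. Schreier's lemma therefore runs the wrong way: generators of $\Aut(V_n)$ yield generators of $\Aut(V_{n+2})$ only if you can also generate that kernel. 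Your fallback (Schreier for the quotients $O^+(S)/W(S)$ ``once finite-index inclusions are established'') presupposes exactly the inclusion that is missing.

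The missing idea is to run the whole induction on the lattice groups ${\rm O}''(S_n)$ and pass to automorphisms only at the end. These are the isometries preserving the positive cone and acting trivially on $A_{S_n}$; they contain the Weyl groups, so no chamber issue arises.
\begin{itemize}
\item The inclusion ${\rm O}''(S_{n+2})\subset{\rm O}''(S_n)$ is automatic from $S_{n+2}\subset S_n\subset S_n^*\subset S_{n+2}^*$.
\item The index is bounded by orbit--stabilizer. As $2S_n\subset S_{n+2}$, the orbit of $S_{n+2}$ injects into the set of $18$-dimensional subspaces of $S_n/2S_n\cong\F_2^{20}$. That set has exactly $l$ elements; no quadratic-type condition is needed.
\item The stabilizer modulo ${\rm O}''(S_{n+2})$ embeds into a subgroup of ${\rm O}(A_{S_{n+2}},q_{A_{S_{n+2}}})$ of order at most $8$. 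This is found by computing how such an isometry can move $u_{n+2}/2^{n+2}$ and $v_{n+2}/2^{n+2}$ while fixing $A_{S_n}$ and preserving the discriminant form.
\item The base case is ${\rm O}''(S_1)$ with $11$ generators: Vinberg's $6$ generators of ${\rm O}'(S_1)$, together with index at most $|{\rm O}(A_{S_1},q_{A_{S_1}})|=2$.
\end{itemize}
This gives $11(8l)^{(n-1)/2}$ generators for ${\rm O}''(S_n)$, hence for its quotient ${\rm O}''(S_n)/W(S_n)\cong\Aut(V_n,\omega_{V_n})$. Finally, $\Aut(V_n)=\Aut(V_n,\omega_{V_n})\rtimes\langle g_4\rangle$, where $g_4$ is the order-$4$ automorphism induced by $(x,y)\mapsto(\sqrt{-1}x,y)$; this supplies the $+1$.

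Your starting count of $12$ generators for $\Aut(V_1)$ is also unjustified. That it reproduces the same final number is a coincidence of bookkeeping, not a proof.
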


We define two subgroups of the isometry group ${\rm O}(S_n)$ of $S_n$
$${\rm O}'(S_n) := \{ g \in {\rm O}\, (S_n)\, |\, g(P_n) = P_n\},$$
$${\rm O}''(S_n) := \{ g \in {\rm O}'\, (S_n)\, |\, g_{A_{S_n}} = {\rm id}_{A_{S_n}}\}<{\rm O}'(S_n),$$
where $P_n\subset S_n\otimes_\Z\R$ is the positive cone of $S_n$, and $A_{S_n}$ is the discriminant group of $S_n$ (see Section \ref{sect1} for more details).

In Section \ref{sect1}, we use {\it the overlattice structure \(S_{n+2}\subset S_n\)} (see Lemma \ref{lem:isogeny}) and Vinberg's $11$-generator bound for \(S_1\) (Theorem \ref{thm11}) to inductively bound \({\rm O}''(S_n)\), via a uniform control on the index \([{\rm O}''(S_n):{\rm O}''(S_{n+2})]\) (Proposition \ref{prop14}). Combined with the fact that the image of the canonical representation ${\rm Aut}(V_n)\rightarrow {\rm GL}(H^0(V_n, \Omega_{V_n}^2))$ is of order $4$, this yields a bound \(11\cdot(8l)^{(n-1)/2}+1\) for this infinite Kummer family $V_n$. Our proof of Theorem \ref{main2} is computer-free.

\begin{theorem}\label{main3}
The group ${\rm O}'(S_3)$ is generated $19361$ explicit isometries and  the group ${\rm Aut}(F_4^2)$ is generated by $4\cdot(19361-1)+1=77441$ automorphisms.
\end{theorem}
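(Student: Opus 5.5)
The plan is to prove Theorem \ref{main3} in two stages. First I produce the $19361$ generators of ${\rm O}'(S_3)$ by a Borcherds-type chamber computation. Then I transfer them to ${\rm Aut}(F_4^2)$ using the Torelli theorem and the order-$4$ image of the canonical representation.

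\textbf{Stage 1: generators of ${\rm O}'(S_3)$.} I exploit the overlattice relation $S_3\subset S_1$ of Lemma \ref{lem:isogeny} (up to the appropriate scaling). By Vinberg, ${\rm O}'(S_1)$ is the semidirect product of a reflection group $W(S_1)$ with the finite symmetry group of its fundamental polytope $D_1$, which has finitely many walls.

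Consider the finite-index subgroup
$$G := {\rm O}'(S_3)\cap {\rm O}'(S_1).$$
Its elements are the isometries of $S_1$ that preserve the sublattice $S_3$, i.e.\ preserve the corresponding isotropic data in the discriminant group.

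I then tessellate $P_3=P_1$ by the $W(S_1)$-chambers, which are the translates of $D_1$. Let $\Gamma$ be the stabilizer in ${\rm O}'(S_1)$ of the subgroup $S_3/\ldots$. Running Borcherds' procedure in this setting amounts to enumerating the $\Gamma$-orbits of chambers by a breadth-first walk across walls, starting from $D_1$. At each wall crossing I test whether the new chamber is $\Gamma$-equivalent to one already found. When it is, I record the isometry carrying one to the other; when it is not, the chamber becomes a new representative.

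The recorded elements, together with the reflections in $(-2)$-vectors of $S_3$ and the stabilizers of the representative chambers, generate ${\rm O}'(S_3)$. This is the standard Poincaré-polyhedron / coset-graph argument: the chamber graph is connected, so pairing relations along a spanning tree generate the group.

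Counting these carefully should give $19361$. I would organize the count as a Schreier-type computation whose size is controlled by the number of orbits of chambers, that is, by the index data from Proposition \ref{prop14} specialized to $n=1\to 3$. A computer carries out the enumeration.

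\textbf{Stage 2: from lattice isometries to automorphisms.} Consider the Weyl group $W(S_3)$ generated by $(-2)$-reflections. By Torelli, ${\rm Aut}(F_4^2)$ maps with finite kernel onto the subgroup of ${\rm O}'(S_3)/W(S_3)$ consisting of isometries whose discriminant action is compatible with the transcendental lattice. Since ${\rm O}(T)$ for the binary form $T={\rm diag}(8,8)$ is finite, its image is accounted for by the order-$4$ image of the canonical representation.

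Concretely, I would take an $h\in{\rm Aut}(F_4^2)$ acting by $\sqrt{-1}$ on $H^{2,0}$. Each generator $g$ of ${\rm O}'(S_3)$ other than the identity is then replaced by the automorphism $\phi_{g,j}$ obtained from $g$ composed with the appropriate Weyl element to restore the nef cone. The four twists $j=0,\dots,3$ are needed so that $g$ glues with $h^j|_T$ to an isometry of $H^2(F_4^2,\Z)$. This gives $4\cdot(19361-1)$ automorphisms, and adding $h$ gives $4\cdot(19361-1)+1$.

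\textbf{Main obstacle.} The main obstacle is Stage 1: making the chamber enumeration terminate with exactly $19361$ elements, and proving that they generate. The correctness of the generating set follows from the connectivity argument. The hard part is the feasibility and verification of the orbit computation, especially the equivalence tests of chambers under $\Gamma$ via the finite discriminant condition.
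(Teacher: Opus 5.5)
\textbf{Stage 1 has a genuine gap: the number $19361$ cannot come out of your route.} Your chamber-walk framework is the right principle; it is Theorem \ref{thm:bound}. But $19361$ is the output of one specific computation, and you never derive it. ``Counting these carefully should give $19361$'' is only an assertion, and for your tessellation it is essentially certainly false.

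With $G={\rm O}'(S_1)$ and Vinberg's polytope, the relevant index is $[{\rm O}'(S_1):{\rm O}'(S_3)]=[{\rm O}'(S_1):{\rm O}'(S_2)]\cdot 2^{18}$, and the first factor is itself large. Eichler transvections together with ${\rm O}(U\oplus E_8^{\oplus 2})$ move the class of $(\alpha+\beta)/2$ in $S_1^*/2S_1^*$, which cuts out $S_2$, through at least $2^{17}$ classes. So $[G:H]>3\cdot 10^{10}$. Since the symmetry group $K$ of Vinberg's polytope is small, an ${\rm O}'(S_3)$-fundamental set of chambers has at least $[G:H]/|K|$ members. The resulting wall-pairing generating set is many orders of magnitude larger than $19361$.

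Proposition \ref{prop14} does not help: it bounds an ${\rm O}''$-index by $8l\approx 1.5\cdot 10^{12}$. Adding ``the reflections in $(-2)$-vectors of $S_3$'' contributes an infinite set and is unnecessary, since wall-pairings plus one chamber stabilizer already generate.

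The missing idea is the intermediate lattice $S_2$. The paper takes $G={\rm O}'(S_2)$ with the Keum--Kondo tessellation, whose chambers have $624$ walls and stabilizer of order $7680$. It shows $[{\rm O}'(S_2):{\rm O}'(S_3)]\le 2^{18}$ via the associated vectors $v_F(f)\in\F_2^{18}$ (Lemma \ref{lem:vF}, Corollary \ref{cor:index}). It then checks by computer that equality holds and that $K$ has exactly $112$ orbits on $G/H$. Theorem \ref{thm:sharperbound}, using the face-orbits $a_t$ of the $H_{j_t}$ and $|\mathcal N_J|=232$, gives $24014$ generators, which become $19361$ after removing duplicates.

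\textbf{Stage 2 reaches the right count by a wrong mechanism.} An element $g\in{\rm O}'(S_3)$ glues with $h^j|_{T}$, where $h$ is your order-$4$ automorphism, only if its discriminant action equals that of $h^j|_T$. These four actions on $A_T\cong(\Z/8)^2$ are distinct, so at most one $j$ works. None works when $g$ lies outside the subgroup generated by $W(S_3)$ and the image of $\Aut(F_4^2)$, which has index $4$ in ${\rm O}'(S_3)$ and so cannot contain all your generators. Even for the generators that do glue, the resulting automorphisms need not generate $\Aut(F_4^2)$.

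The correct step is as follows. By Torelli, $p\colon\Aut(F_4^2)\to{\rm O}'(S_3)/W(S_3)$ is injective with image of index $4$. Schreier's bound (Proposition \ref{pp:Suzuki}, Corollary \ref{cor:boundgeneral}) then gives $l(\Aut(F_4^2))\le 4(19361-1)+1$. The generators are Schreier generators $t_a\,g\,t_b^{-1}$ built from coset representatives, not four twists of each $g$.
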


Section \ref{sect2} develops, for a group $G$ with a given tessellation for \(G\), a constructive method to explicitly obtain a fundamental domain for a finite-index subgroup \(H\) of \(G\), and consequently provides explicit upper bounds for the minimal number of generators \(l(H)\) in terms of the fundamental domain of $H$ (Theorems \ref{thm:bound}, \ref{thm:sharperbound}). Applying these results to ${\rm O}'(S_{3})\subset {\rm O}'(S_{2})$ together with known results on $V_2$ (\cite{KK01}), we find a fundamental domain of the action of ${\rm O}'(S_{3})$ on the positive cone $$P_2=P_3 \subset S_{3} \otimes_{\Z} \R =S_{2} \otimes_{\Z} \R$$ as a union of $112$ fundamental domains of ${\rm O}'(S_{2})$ on $P_2$ and we obtain $19361$ explicit generators in Theorem \ref{main3}, which will be used in the proof of Theorem \ref{main1}. The seond statement of Theorem \ref{main3} follows from the first statement and Proposition \ref{pp:Suzuki}.

Now we briefly explain the idea of our proof of Theorem \ref{main1}. Let $S$ be the N\'eron-Severi lattice of a K3 surface $V$ containing an ample class $h$. Section \ref{sect5} introduces a practical {\it Dirichlet reduction} algorithm: for a generating set $\mathcal{S}$ of the group \({\rm O}'(S)\), one replaces an isometry \(f_i\in \mathcal{S}\) by a word \(f_w\) with a smaller intersection number \((f_w(h),h)\), iteratively lowering a total \(h\)-degree $\Sigma_{f\in \mathcal{S}}(f(h),h)$ to shrink the generating set (see Remark \ref{rmk:Dirichlet}). Applied to \(V_3=F_4^2\), starting from the $19361$ explicit generators of \({\rm O}'(S_3)\) in the proof of Theorem \ref{main3} and using the Dirichlet reduction repeatedly together with \cite[Page 180, Theorem 6.9]{Su82}, we finally obtain the $16$ automorphisms in Table \ref{tab:16generators} generating ${\rm Aut}(F_4^2)$.

Our methods of finding generators of automorphism groups are different from the existing ones such as \cite{Vi83}, \cite{Ko98b}, \cite{KK01}, \cite{DK02}, \cite{DK03}, \cite{KS14}, \cite{Sm14}, \cite{Sm15}, \cite{Sm16}.

\begin{remark}\label{rem1}
Let $F_d^n \subset \P_{\C}^{n+1}$ be the Fermat hypersurface of dimension $n \ge 2$ and degree $d \ge 3$. Then
$$\Aut (F_d^n) = (\mu_{d}^{n+2} \rtimes \mathfrak{S}_{n+2})/\mu_{d}\,\,,(n,d)\neq (2,4),$$
where $\mu_{d}$ is the group of $d$-th roots of unity and $ \mathfrak{S}_{n+2}$ is the symmetric group on $n+2$ letters,  as expected (see eg. \cite{Sh88}, also \cite{YYZ25} and \cite{EL25}), whereas $\Aut (F_4^2)$, or more generally $\Aut (V)$ of a K3 surface $V$ with $\rho(V) =20$, is an infinite group containing $\Z * \Z$ (see \cite{SI77}, \cite{Og07} and \cite{Yu25}). See \cite{Og05}, \cite{YYZ25} and \cite{EL25} for related results.
\end{remark}

\medskip

{\bf Acknowledgement.} We thank Professors Shing-Tung Yau and Bong Lian for invitation to ICBS 2025 where this project was started. Fundamental works on singular K3 surfaces and Fermat hypersurfaces due to Late Professor Tetsuji Shioda inspired much for our work.

\section{Overlattice structure and proof of Theorem \ref{main2}} \label{sect1}

In this section, we first fix some notation used in this paper and then prove Theorem \ref{main2}. In what follows, we freely use basic facts on K3 surfaces explained in \cite{BHPV04}.

\medskip

A lattice $(L, (*,**))$ is a finitely generated free $\mathbb{Z}$-module $L$, endowed with a $\mathbb{Z}$-valued symmetric bilinear form $b_L=(*,**)$. For brevity, we often denote $(x,x)$ by $x^2$. We call $L$ an {\it even}  lattice if $x^2\in 2\mathbb{Z}$ for any $x\in L$.  When  $L$ is a sublattice of $L'$ of finite index, we call $L'$ an overlattice of $L$. Note that if $L'$ is an overlattice of a non-degenerate lattice $L$, then we have a canonical embedding $$L \subset L' \subset L^*:={\rm Hom}_\Z(L,\Z)\subset L\otimes_\Z\Q$$ by $(x, y) \in \Z$ for $x \in L$ and $y \in L'$. For a lattice $L = (L, b_L)$ and any non-zero integer $m$, we denote by $L(m)$ the lattice $(L, mb_L)$. 
 We denote by $U$ the unique even unimodilar hyperbolic lattice of rank $2$ and by $A_n$, $D_n$, $E_n$ the {\it negative definite} root lattice corresponding to the Dynkin diagram $A_n$, $D_n$, $E_n$ and by $\Lambda := U^{\oplus 3} \oplus E_8^{\oplus}$ the K3 lattice, which is nothing but the unique even unimodular lattice of signature $(3, 19)$ (See eg. \cite{Se12}). With respect to the cup product, we have $\Lambda \simeq H^2(V, \Z)$ for any K3 surface $V$.

\medskip

Let $V$ be a K3 surface. We denote by $S_V$ the N\'eron-Severi lattice and by $\rho(V) := {\rm rank}\, S_V$ the Picard number of $V$. $S_V$ is of signature $(1, \rho(V) -1)$. Let $T_V$ be the transcendental lattice of $V$, that is, the orthogonal complement of $S_V$ in $H^2(V, \Z).$ Then $T_V$ is of signature $(2, 20-\rho(V)).$ We set $(S_V)_{\Q} := S_V \otimes_{\Z} \Q$, $(S_V)_{\R} := S_V \otimes_{\Z} \R$ and similarly for $(T_V)_{\Q}$. Using the cup product, we naturally regard as
$$S_V \subset S_V^* = {\rm Hom}_{\Z}(S_V, \Z) \subset (S_V)_{\Q},$$
$$T_V \subset T_V^* = {\rm Hom}_{\Z}(T_V, \Z) \subset (T_V)_{\Q}.$$
Then $A_{S_V} := S_V^*/S_V$ and $A_{T_V} := T_V^*/T_V$ are finite abelian groups with quadratic forms $q_{A_{S_V}} : A_{S_V} \to \Q/2\Z$ and $q_{A_{T_V}} : A_{T_V} \to \Q/2\Z$ induced from the quadrtic form $q_V(x) = (x^2)$ on $H^2(V, \Z)$ such that
$$\iota : A_{S_V} \simeq H^2(V, \Z)/(S_V \oplus T_V) \simeq A_{T_V}\,\, ,\,\, q_{A_{S_V}}(v) + q_{A_{T_V}}(\iota(v)) = 0$$
in a natural manner. We call $(A_{S_V}, q_{A_{S_V}})$ the {\it discriminant group} of $V$. 

\medskip

We denote the group of isometries of $S_V$ by ${\rm O}\, (S_V)$ and naturally regard ${\rm O}\, (S_V) \subset {\rm O}\, ((S_V)_{\Q}) \subset {\rm O}\, ((S_V)_{\R})$. We denote by $W(S_V)$ the Weyl group of $S_V$, i.e., the $(-2)$-reflection group of $S_V$ generated by the $(-2)$-reflections $r_v$ on $S_V$:
$$r_v : x \mapsto x + (x, v)v,$$
where $v \in S_V$ with $(v^2) = -2$.
The positive cone of $V$, which we denote by $P_V$, is the connected component of
$$\{x \in (S_V)_{\R}\, |\, (x^2) := (x.x) > 0\}$$
containing the ample classes. Note that
$$W(S_V) \subset {\rm O}''(S_V) \subset {\rm O}'(S_V) \subset {\rm O}\, (S_V).$$
Let
$$\Aut(V, \omega_V) := {\rm Ker}\, (\Aut (V) \to {\rm O}\, (T_V)\, ;\, g \mapsto g^*|_{T_V}).$$
Then $\Aut(V,\omega_V)$ is nothing but the subgroup of $\Aut (V)$ acting on $H^0(V, \Omega_V^2) = \C \omega_V$ as identity. It is a finite index normal subgroup of $\Aut (V)$ such that $\Aut (V)/\Aut(V,\omega_V)$ is a finite cyclic group by the finiteness of canonical representation (\cite[Theorem 14.10]{Ue75}) together with the fact that any finite subgroup of $\C^{\times}$ is cyclic. By the global Torelli theorem for K3 surfaces \cite{PS71}, the map $\Aut (V,\omega_V) \to {\rm O}\, (S_V)$, $g \mapsto g^*|_{S_V}$, is injective. We regard $\Aut(V,\omega_V)$ as a subgroup ${\rm O}''(S_V)$
and we have 
$${\rm O}''(S_V) = W(S_V) \rtimes \Aut (V,\omega_V) .$$

\medskip

Fix a primitive embedding of $T_n$ into the K3 lattice $\Lambda_{K3}$ $$T_n= \left(\begin{array}{cc}
2^{n} & 0\\
0 & 2^{n}
\end{array}\right)\hookrightarrow \Lambda_{K3}:=U^{\oplus 3}\oplus E_8^{\oplus 2},$$ which is unique up to isometry (\cite[Theorem 1.14.4]{Ni80}).  We define $S_n:=(T_n)^{\perp}_{\Lambda_{K3}}$. Then $$S_n=U\oplus E_8^{\oplus 2}\oplus\left(\begin{array}{cc}
-2^n & 0\\
0 & -2^n
\end{array}\right).$$

For any integer $n \ge 2$, we define
$$V_{n} := {\rm Km}\, (E_{\sqrt{-1}} \times E_{2^{n-2} \sqrt{-1}}).$$
Then $\rho(V_n)=20$ and the surface $V_n$ ($n \ge 2$) is characterized, uniquely up to isomorphism, as the K3 surface with transcendental lattice $T_n$ up to the conjugate action of ${\rm SL}\, (2, \Z)$ by Shioda-Mitani \cite{SM74} and Shioda-Inose \cite{SI77}. Then $T_{V_n}\cong T_n$ and $S_{V_n}\cong S_n$ ($n\ge 2$). It is well known, e.g.
by Shioda \cite[Example 5.2]{Sh79}, that the lattice $T_3$ is also isometric to the transcendental lattice of $F_4^2$ up to the conjugate action of ${\rm SL}\, (2, \Z)$, thus $$F_4^2 \cong V_3.$$

It is also well known that $T_1$ is the transcendental lattice of the most algebraic K3 surface $X_4$ (\cite{Vi83}) and we denote $X_4=V_1$ while $X_4$ is not a Kummer surface. Then $T_{V_1}\cong T_1$ and $S_{V_1}\cong S_1$.

As primitive sublattices of $\Lambda_{K3}$, there is no inclusion relation between $T_n$ (resp. $S_n$) and $T_m$ (resp. $S_m$) for $m\neq n$. But as lattices, there exists an embedding from $T_n$ (resp. $S_n$) into $T_m$ (resp. $S_m$) for $m< n$.

\begin{lemma}\label{lem:isogeny}
For any integers $m$, $n$ with $1\le m < n$, the following statements hold:
\begin{enumerate}
\item There is a unique even overlattice $T$ of $T_n$ such that $T_n\subset T\subset T_n^*$ and $T\cong T_m$;
\item There is a unique even overlattice $S$ of $S_n$ such that $S_n\subset S\subset S_n^*$ and $S\cong S_m$;
\item Under the embedding $S_n\subset S_m$ in (2), any isometry of $S_n$ extends to $S_m$ i.e. $${\rm O}(S_n)\subset {\rm O}(S_m) \,\text{ and } \,{\rm O}'(S_n)\subset {\rm O}'(S_m).$$
\end{enumerate}
\end{lemma}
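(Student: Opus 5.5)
Since $S_n = U\oplus E_8^{\oplus 2}\oplus T_n(-1)$ with $T_n(-1)=\langle -2^n\rangle^{\oplus 2}$, and $U\oplus E_8^{\oplus 2}$ is unimodular, we have $A_{S_n}\cong A_{T_n(-1)}\cong (\Z/2^n\Z)^{\oplus 2}$. Every overlattice of $S_n$ inside $S_n^*$ is therefore of the form $U\oplus E_8^{\oplus 2}\oplus T'$ with $T'$ an overlattice of $T_n(-1)$. So (2) reduces to (1), applied to $T_n(-1)$, i.e. to $T_n$ with the sign reversed. The same reduction gives uniqueness: an even overlattice $S$ of $S_n$ with $S\cong S_m$ corresponds to an isotropic subgroup of $A_{S_n}$, and this subgroup lives entirely in the $T_n(-1)$-part.

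For (1), let $e_1,e_2$ be the orthogonal basis of $T_n$ with $e_i^2=2^n$. Then $T_n^*$ is spanned by $e_1/2^n$ and $e_2/2^n$, and overlattices of $T_n$ correspond to subgroups $H\subset A_{T_n}=(\Z/2^n)^2$. Existence is immediate: $T:=\langle e_1/2^{k},e_2/2^{k}\rangle$ with $k=(n-m)/2$ works when $n-m$ is even. When $n-m$ is odd I would instead use $\langle (e_1+e_2)/2^{k+1},\,(e_1-e_2)/2^{k+1}\rangle$ with $n-m=2k+1$. This lattice has Gram matrix $\mathrm{diag}(2^{n-2k-1},2^{n-2k-1})$, and it is contained in $T_n^*$. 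The constructed $T$ must be even, so $m\ge1$ is needed, which holds by assumption.

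For uniqueness, an overlattice $T\cong T_m$ has index $2^{n-m}$. It is even and in particular integral, so its image $H\subset A_{T_n}$ is isotropic for $q$ with values in $\Q/2\Z$. Moreover $T^*/T\cong A_{T_m}\cong(\Z/2^m)^2$, and $T^*/T\cong H^\perp/H$. I would classify the subgroups $H\subset(\Z/2^n)^2$ of order $2^{n-m}$ with these properties. Write $H$ via its generators $(a,b)$, with value $q(a,b)=(a^2+b^2)/2^n \bmod 2$. The key point is that $a^2+b^2\equiv 0 \pmod{2^{n+1}}$ forces $a,b$ to have equal $2$-adic valuation, with $a/b\equiv$ a square root of $-1$ modulo the appropriate power. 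Because $-1$ is not a square in $\Z_2$, this pins the valuations down. Combined with the requirement that $H^\perp/H$ be homocyclic of type $(2^m,2^m)$ rather than $(2^{m'},2^{m''})$ with $m'\neq m''$, this should leave exactly one $H$. The 2-adic bookkeeping, especially the odd case $n-m$ and the exclusion of cyclic isotropic $H$, is the step I expect to be the main obstacle. I would do it by induction on $n-m$: a minimal step $H$ of order $2$ must be generated by $2^{n-1}(a,b)$ with $a,b$ odd, which gives the unique element $2^{n-1}(1,1)$. Iterating yields a chain, and uniqueness at each stage comes from the fact that the unique index-2 isotropic extension of $T_{n'}$ is determined by $T_{n'}$ alone.

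For (3), since $S\supset S_n$ is canonically characterized inside $S_n^*$ by (2), any $g\in{\rm O}(S_n)$ acts on $S_n^*$ and maps $S$ to an even overlattice $g(S)$ of $S_n$ with $g(S)\cong S_m$. By uniqueness $g(S)=S$, so $g$ restricts to an isometry of $S\cong S_m$, giving ${\rm O}(S_n)\subset{\rm O}(S_m)$. The two lattices share the same real span, so the positive cones coincide, and therefore ${\rm O}'(S_n)\subset{\rm O}'(S_m)$.
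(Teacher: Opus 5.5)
Your route is the same as the paper's: Nikulin's correspondence between even overlattices and isotropic subgroups of the discriminant group, plus a computation in $A_{T_n}\cong(\Z/2^n)^{\oplus 2}$. Your existence constructions, the splitting $S=U\oplus E_8^{\oplus 2}\oplus T'$, and the deduction of (3) from uniqueness are all fine. The gap is in the uniqueness part of (1), which is the real content of the lemma and the only thing (3) uses.

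Your stated key point is false: isotropy does not force equal $2$-adic valuations. For $n\ge 3$ all three elements of order $2$ in $A_{T_n}$ are isotropic; for example $q(e_1/2)=2^{n-2}\in 2\Z$. More generally, $a\,e_1/2^n$ is isotropic as soon as $2v_2(a)\ge n+1$. Also $-1$ has no square root modulo $4$, so ``$a/b\equiv\sqrt{-1}$'' pins nothing down. What is true is that $v_2(a^2+b^2)=2\min(v_2(a),v_2(b))$ if the valuations differ, and $2v+1$ if both equal $v$. All the discrimination therefore has to come from the condition $H^\perp/H\cong(\Z/2^m)^{\oplus 2}$. Even in your one-step case with $n\ge 3$, $e_1/2$ and $e_2/2$ are excluded only because they produce Gram matrix $\mathrm{diag}(2^{n-2},2^n)$, not because they fail to be isotropic.

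More seriously, the induction does not close as written. Uniqueness of each index-$2$ step says nothing about an arbitrary $T\cong T_m$ unless you show that every such $T$ passes through the specific lattice $T_n^{(1)}:=T_n+\Z\frac{e_1+e_2}{2}\cong T_{n-1}$. This is not automatic: $T/T_n$ can contain $e_1/2$ (the correct answer $\langle e_1/2,e_2/2\rangle$ for $n-m=2$ does), and $T_n+\Z\frac{e_1}{2}\not\cong T_{n-1}$.

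The missing argument runs as follows.
Suppose $\frac{e_1+e_2}{2}\notin T$, and put $H=T/T_n$, of order $2^{n-m}\ge 2$. Then $H[2]$ has order $2$ and is spanned by $e_1/2$ or $e_2/2$, so $H$ is cyclic. Write $H=\langle 2^m y\rangle$ with $y=s\frac{e_1}{2^n}+t\frac{e_2}{2^n}$, say with $s$ odd and $t$ even. Put $z=-t\frac{e_1}{2^n}+s\frac{e_2}{2^n}$. Then $(y,z)=0$, so $z$ maps into $H^\perp$. Moreover $y,z$ form a basis of $A_{T_n}$, because $s^2+t^2$ is odd. Since $H\subset\langle y\rangle$, the class of $z$ has order $2^n>2^m$ in $H^\perp/H\cong A_T$, contradicting $A_T\cong(\Z/2^m)^{\oplus 2}$. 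Hence $T\supset T_n^{(1)}$, and induction on $n-m$ finishes.

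This argument uses only the group $A_T$, so it applies verbatim to $S=U\oplus E_8^{\oplus 2}\oplus T'$ in (2). That also removes the unstated step in your reduction, namely that $U\oplus E_8^{\oplus 2}\oplus T'\cong S_m$ forces $T'\cong T_m(-1)$.
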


\begin{proof}
(1) and (2) follow from computation of discriminant forms and \cite[Proposition 1.4.1]{Ni80}. Then (3) follows from (2).
\end{proof}

For a finitely generated group $G$, we denote by $l(G)$ the minimal number of generators of $G$. For the trivial group $\{1\}$, we define $l(\{1\}) = 0$. Throughout this paper, we use this notation and the following well known fact on the group which is a direct consequence of \cite[Page 185, (6.14)]{Su82}.
\begin{proposition}\label{pp:Suzuki}
Let $N$ be a finitely generated group and $N_1<N$ with $s:=[N:N_1]<\infty$. Then
$l(N_1)\le (l(N)-1)s+1.$ In particular, $l(N_1)\le l(N)s$.
\end{proposition}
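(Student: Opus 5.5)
The inequality is the Schreier index formula, and I would prove it with a topological covering argument.

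Set $r := l(N)$ and choose a free group $F$ of rank $r$ together with a surjection $\pi: F \to N$. Put $F_1 := \pi^{-1}(N_1)$. Then $[F:F_1] = [N:N_1] = s$, because $\pi$ induces a bijection $F/F_1 \cong N/N_1$ of coset spaces. Since $\pi|_{F_1}: F_1 \to N_1$ is surjective, we get $l(N_1) \le l(F_1)$. It therefore suffices to show that a subgroup of index $s$ in a free group of rank $r$ is free of rank $(r-1)s+1$. This is the Nielsen--Schreier theorem with the Schreier index formula, which is \cite[Page 185, (6.14)]{Su82}.

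To make the argument self-contained, realize $F$ as $\pi_1$ of a wedge $X$ of $r$ circles. The subgroup $F_1$ corresponds to a connected $s$-sheeted covering graph $\tilde X \to X$, which has $s$ vertices and $rs$ edges. Euler characteristic is multiplicative under finite coverings, so
$$\chi(\tilde X) = s\,\chi(X) = s(1-r).$$
A connected graph is homotopy equivalent to a wedge of $1-\chi$ circles. Hence $F_1 \cong \pi_1(\tilde X)$ is free of rank $1 - s(1-r) = (r-1)s + 1$.

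Combining this with the surjection $F_1 \to N_1$ gives $l(N_1) \le (l(N)-1)s + 1$. The case $l(N) = 0$ needs no separate treatment: then $N$ is trivial, so $s = 1$, and the formula gives the bound $-s+1 = 0$, which holds.

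For the second claim, if $l(N) \ge 1$ then $(l(N)-1)s + 1 \le l(N)s$ is equivalent to $1 \le s$, which is true. If $l(N) = 0$ both sides are $0$.

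The only substantive step is the rank computation for finite-index subgroups of free groups. Everything else is formal, and that step can be quoted from Suzuki.
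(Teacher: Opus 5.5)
Your proof is correct and is essentially the paper's argument. The paper gives no proof beyond citing the Schreier index bound in \cite[Page 185, (6.14)]{Su82}, and your pullback $F_1=\pi^{-1}(N_1)$ to a free group of rank $l(N)$, followed by the covering-graph Euler characteristic count giving rank $(l(N)-1)s+1$, is the standard proof of exactly that fact. (The final case split is unnecessary, since $(l(N)-1)s+1\le l(N)s$ is equivalent to $s\ge 1$ regardless of $l(N)$.)
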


{\it Let us return back to the proof of Theorem \ref{main2}.}

\medskip

The following theorem follows from Vinberg \cite[Section 2, 2.2 and 2.4]{Vi83}:

\begin{theorem}\label{thm11} The group ${\rm O}''(S_1)$ is generated by $11$ elements.
\end{theorem}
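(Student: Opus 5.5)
We will derive the statement from Vinberg's description of $\Aut(X_4)$ together with the decomposition ${\rm O}''(S_V) = W(S_V) \rtimes \Aut(V,\omega_V)$ recalled above, applied to $V = V_1 = X_4$.

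\emph{Step 1: the Weyl group part.} Vinberg \cite[Section 2]{Vi83} runs his algorithm on the hyperbolic lattice $S_1 = U\oplus E_8^{\oplus 2}\oplus A_1^{\oplus 2}$. He obtains a fundamental chamber $D$ of $W(S_1)$ on $P_1$ bounded by finitely many $(-2)$-walls. The corresponding $(-2)$-vectors $e_1,\dots,e_k$ are exactly the smooth rational curves on $X_4$ up to the symmetry. Since $W(S_1)$ is a reflection group with fundamental chamber $D$, it is generated by the reflections $r_{e_1},\dots,r_{e_k}$. The Coxeter diagram of $D$ has a finite symmetry group $G_D$. By Torelli, $\Aut(X_4,\omega_{X_4})$ is identified with the subgroup of $G_D$ acting trivially on $A_{S_1}$. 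So ${\rm O}''(S_1)=W(S_1)\rtimes {\rm Stab}''(D)$, with ${\rm Stab}''(D)$ finite.

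\emph{Step 2: reduce the number of generators.} The symmetry group acts on the walls. Reflections in walls lying in one ${\rm Stab}''(D)$-orbit are conjugate by elements of ${\rm Stab}''(D)$, since $g r_e g^{-1}=r_{g(e)}$. Hence ${\rm O}''(S_1)$ is generated by:
\begin{itemize}
\item generators of the finite group ${\rm Stab}''(D)$;
\item one reflection $r_{e}$ for each ${\rm Stab}''(D)$-orbit of walls of $D$.
\end{itemize}
Vinberg's sections 2.2 and 2.4 give the diagram: the walls are $12$ plus $12$ (or similar) vectors, organized in a small number of orbits under the symmetry group. That symmetry group is a finite group (essentially $\mathfrak{S}_6$-type data, coming from the $\mu_4$ and permutation actions on $X_4$). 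Counting generators of the finite stabilizer (say two, since suitable finite groups of this kind are $2$-generated) plus one reflection per orbit should give at most $11$. Concretely I would read off from Vinberg's table the orbits of walls under ${\rm Stab}''(D)$ and a two-element generating set of ${\rm Stab}''(D)$, then sum. If the total exceeds $11$, I would instead use Vinberg's explicit list of generators of $\Aut(X_4)$ and lift via the semidirect product.

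\emph{Main obstacle.} The delicate point is the distinction between the full symmetry group of $D$ and ${\rm Stab}''(D)$, i.e.\ the part acting trivially on $A_{S_1}\cong(\Z/2)^2$. This is a subgroup of index at most $2$ (order of ${\rm O}(q_{A_{S_1}})$ restricted to the image). Its orbits on walls may split compared with the full symmetry group, which increases the count. I would handle this by computing the action of each diagram symmetry on $A_{S_1}$ directly from Vinberg's explicit vectors, and then verifying that the orbit count plus generators of ${\rm Stab}''(D)$ is still $11$.
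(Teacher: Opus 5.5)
Your Step 1 misreads what Vinberg computes, and the argument fails there. The finite polyhedron in \cite[Section 2]{Vi83} is not a chamber of the $(-2)$-Weyl group $W(S_1)$. It is a fundamental polyhedron for the \emph{full} reflection group of $S_1$, generated by reflections in all roots: primitive $e$ with $e^2=-2$, or with $e^2=-4$ and $(e,S_1)\subset 2\Z$ (e.g.\ $e=\alpha+\beta$ in the notation of Section \ref{sect3}). The chamber of $W(S_1)$ containing an ample class is the nef cone of $X_4$. It has infinitely many walls, and its stabilizer in ${\rm O}''(S_1)$ is $\Aut(X_4,\omega_{X_4})$, which is infinite (Remark \ref{rem1}). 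So your conclusion ``${\rm Stab}''(D)$ finite'' is false; it would make $\Aut(X_4)$ finite. The bookkeeping in Step 2 (generators of the chamber stabilizer plus one reflection per orbit of walls) is sound in principle. Applied to the nef cone, however, it presupposes generators of $\Aut(X_4,\omega_{X_4})$ and the orbits of smooth rational curves. In any case you never carry out a count (``should give at most $11$''), so nothing is proved.

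Your plan for passing to ${\rm O}''$ would also fail on the correct polyhedron. Every reflection in a primitive root $e$ of norm $-4$ acts nontrivially on $A_{S_1}\cong(\Z/2)^2$. Indeed, $e/2\in S_1^*\setminus S_1$, so some $x\in S_1^*$ has $(x,e)$ odd, and then $r_e(x)-x=(x,e)\,e/2\equiv e/2 \bmod S_1$. Hence ${\rm O}''(S_1)$ is not of the form (reflection group)$\rtimes$(subgroup of diagram symmetries), and restricting symmetries and recounting orbits does not describe it.

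The paper avoids this. Vinberg's polyhedron has symmetry group $\mathfrak{S}_5$, and his description gives $2+1+1$ reflections (from $\sS_2'$, $\sS_2''$, $\sS_1$) plus two generators of $\mathfrak{S}_5$, so $l({\rm O}'(S_1))\le 6$. Since $q_{A_{S_1}}$ takes the values $1/2,1/2,1$, one has ${\rm O}(q_{A_{S_1}})\cong\Z/2$, hence $[{\rm O}'(S_1):{\rm O}''(S_1)]\le 2$. The Schreier bound (Proposition \ref{pp:Suzuki}) then gives $l({\rm O}''(S_1))\le (6-1)\cdot 2+1=11$. That index-$2$ Schreier step is the idea your proposal is missing.
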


\begin{proof} By \cite[Section 2, 2.2 and 2.4]{Vi83}, under the notation there, the group ${\rm O}'(S_{1})$ is generated by
$$2+1+1+ 2 = 6$$
elements from $\sS_2'$, $\sS_2''$, $\sS_1$ and generators of the symmetric group $S_5$ of degree $5$ respectively. Here we recall that the symmetric group $S_5$ is generated by $(12)$ and $(12345)$.

We have $A_{T_1} \simeq A_{S_1} = (\Z/2) v_1 \oplus (\Z/2) v_2$ with
$$q_{A_{S_1}}(v_1) = q_{A_{S_1}}(v_2) = 1/2\,\, ,\,\, q_{A_{S_1}}(v_1+v_2) =1\,\, {\rm  in}\,\, \Q/2\Z$$
for suitable $v_1, v_2 \in A_{S_1}$. From this, we find that ${\rm O}(A_{S_1}, q_{A_{S_1}}) \simeq \Z/2$. Then, by the definition of ${\rm O}''(S_1)$, we obtain
$$[{\rm O}'(S_1) : {\rm O}''(S_1)] \le |{\rm O}(A_{S_1}, q_{A_{S_1}})| =2.$$

Thus, by Proposition \ref{pp:Suzuki}, ${\rm O}''(S_1)$ is generated by $11$ elements. \end{proof}

 Let $v_n, u_n$ be a $\Z$-basis of $T_n(-1)$ whose Gram matix is
$$\left(\begin{array}{cc}
-2^{n} & 0\\
0 & -2^{n}
\end{array}\right).$$ 

Consider the embedding $i_n : T_{n+2} \to T_n$ in Lemma \ref{lem:isogeny} (1), i.e., 
$$i_n(v_{n+2}) = 2v_n\,\, ,\,\, i_n(u_{n+2}) = 2u_n.$$
Then, by Lemma \ref{lem:isogeny} (2),  $i_n$ gives the following isometry onto the image:
$$\iota_n := {\rm id}_{U \oplus E_8^{\oplus 2}} \oplus i_n : S_{n+2} \to S_n.$$
We regard $S_{n+2}$ as a sublattice of $S_{n}$, or equivalently, $S_{n}$ is an even overlattice of $S_{n+2}$ by $\iota_n$. By the definition of $\iota_n$, we have then $P(S_n) = P(S_{n+2})$ and
$$S_{n}/S_{n+2} = (\Z/2) [v_{n}] \oplus (\Z/2) [u_{n}]\simeq (\Z/2)^{\oplus 2},$$
where $[v_{n}]$ (resp. $[u_{n}]$) is the element of $S_{n}/S_{n+2}$ represented by $v_n$ (resp. $u_{n}$). By Lemma \ref{lem:isogeny}, we have $g(S_{n}) = S_n$ for any $g \in {\rm O}\,(S_{n+2})$.

\begin{proposition}\label{prop14} ${\rm O}''(S_{n+2})$ is a subgroup of ${\rm O}''(S_{n})$ under the canonical embedding $S_{n+2} \subset S_{n} \subset S_{n+2}^{*}$ given by $\iota_n$. Moreover, $[{\rm O}''(S_{n}) : {\rm O}''(S_{n+2})] \le 8 l$, where $l$ is the constant in Theorem \ref{main2}.
\end{proposition}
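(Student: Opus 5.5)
The plan has three steps: prove the inclusion using that $S_{n+2}\subset S_n\subset S_n^*\subset S_{n+2}^*$ is a chain of lattices; bound the index by an orbit–stabilizer argument on index-$4$ sublattices of $S_n$, which contributes the factor $l$; and bound the discrepancy inside the stabilizer by a discriminant-group computation, which contributes the factor $8$.

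\emph{Inclusion.} Dualizing $\iota_n$ gives $S_{n+2}\subset S_n\subset S_n^*\subset S_{n+2}^*$. Take $g\in {\rm O}''(S_{n+2})$. It acts trivially on $A_{S_{n+2}}=S_{n+2}^*/S_{n+2}$, so it preserves every intermediate group $S_{n+2}\subset M\subset S_{n+2}^*$. In particular $g(S_n)=S_n$ and $g(S_n^*)=S_n^*$. The induced action on $A_{S_n}=S_n^*/S_n$ is a quotient of the trivial action on $S_n^*/S_{n+2}\subset A_{S_{n+2}}$, hence trivial. Since $P(S_n)=P(S_{n+2})$, $g$ also preserves the positive cone. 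Therefore $g\in {\rm O}''(S_n)$.

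\emph{Orbit count (factor $l$).} We have $S_n/S_{n+2}\simeq(\Z/2)^{\oplus2}$, so $2S_n\subset S_{n+2}$. Hence $S_{n+2}$ corresponds to a codimension-$2$ subspace of $S_n/2S_n\simeq\F_2^{20}$ (recall ${\rm rank}\,S_n=20$). The group ${\rm O}''(S_n)$ acts on the set of such subspaces. Let $H\subset{\rm O}''(S_n)$ be the stabilizer of $S_{n+2}$. Then $[{\rm O}''(S_n):H]$ is at most the number of codimension-$2$ subspaces of $\F_2^{20}$, which is the Gaussian binomial
$$\frac{(2^{20}-1)(2^{19}-1)}{(2^2-1)(2-1)}=l.$$

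\emph{Stabilizer versus ${\rm O}''(S_{n+2})$ (factor $8$).} By the inclusion step, ${\rm O}''(S_{n+2})\subset H$. Restricting elements of $H$ to $S_{n+2}$ gives $H\subset{\rm O}'(S_{n+2})$, and ${\rm O}''(S_{n+2})$ is the kernel of $H\to{\rm O}(A_{S_{n+2}},q)$. So it suffices to show that the image of $H$ has order at most $8$.

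The image consists of isometries $\bar g$ of $A_{S_{n+2}}\simeq(\Z/2^{n+2})^{\oplus2}$, with form $q={\rm diag}(-1/2^{n+2},-1/2^{n+2})$ generated by $v_{n+2}^*,u_{n+2}^*$, subject to two conditions:
\begin{itemize}
\item $\bar g$ preserves the isotropic subgroup $S_n/S_{n+2}$;
\item $\bar g$ induces the identity on the subquotient $(S_n^*/S_{n+2})/(S_n/S_{n+2})=A_{S_n}\simeq(\Z/2^{n})^{\oplus2}$.
\end{itemize}
I will write $\bar g$ as a $2\times2$ matrix over $\Z/2^{n+2}$. The subgroup $S_n^*/S_{n+2}$ is generated by $4v_{n+2}^*,4u_{n+2}^*$ (up to the identification), and $S_n/S_{n+2}$ by $2^{n+1}v_{n+2}^*,2^{n+1}u_{n+2}^*$. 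Triviality on the subquotient then forces $\bar g\equiv I$ modulo $2^{n-1}$ in a suitable sense, up to a few residual parameters. The isometry condition $\bar g^{T}\bar g\equiv I$ modulo $2^{n+3}$ on diagonal entries, and modulo $2^{n+2}$ off the diagonal, then pins these parameters down to at most $8$ possibilities.

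\emph{Conclusion and main obstacle.} Combining the two bounds gives $[{\rm O}''(S_n):{\rm O}''(S_{n+2})]\le 8l$. I expect the discriminant-group computation in the third step to be the real work. It is a finite but fiddly $2$-adic count of orthogonal matrices satisfying the congruence conditions. I would first set it up as counting lifts of the identity of $A_{S_n}$ to $A_{S_{n+2}}$ that are compatible with $q$. If the count comes out larger than $8$ for small $n$, I would fall back to a weaker constant, which would still suffice for finite generation.
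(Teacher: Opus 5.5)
Your first two steps match the paper's proof. These are the inclusion through $S_{n+2}\subset S_n\subset S_n^*\subset S_{n+2}^*$, and the orbit of $S_{n+2}$ in ${\rm Gr}(18,20)(\F_2)$, which gives the factor $l$. Your reduction to bounding the image of the stabilizer $B={\rm O}''(S_n)\cap{\rm O}'(S_{n+2})$ in ${\rm O}(A_{S_{n+2}},q)$ is also the paper's.

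The gap is in the third step, the only place the factor $8$ is earned. There your set-up is wrong, and the count is not carried out. Since $v_n=v_{n+2}/2$, we have $v_n/2^n=v_{n+2}/2^{n+1}=2v^*_{n+2}$. So $S_n^*/S_{n+2}$ is generated by $2v^*_{n+2},2u^*_{n+2}$, not by $4v^*_{n+2},4u^*_{n+2}$. Your subgroup has order $2^{2n}$ instead of $2^{2n+2}$, and its quotient by $S_n/S_{n+2}$ is $(\Z/2^{n-1})^{2}$, not $A_{S_n}$. With your generators, triviality on $A_{S_n}$ only yields $\bar g\equiv I \bmod 2^{n-1}$. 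For $n=1$, the step $S_1\to S_3$ needed for $F_4^2$ in Corollary \ref{cor11}, this is no condition at all. Preserving $S_1/S_3=A_{S_3}[2]$ is automatic, so your computation could only bound the image by $|{\rm O}(A_{S_3},q)|=16$. Your fallback to ``a weaker constant'' would then not prove the stated bound $8l$.

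The fix is what the paper does. For $f\in B$, triviality on $A_{S_n}$ gives $f(u_n/2^n)=u_n/2^n+\alpha$ with $\alpha\in S_n$. Put $u^*=u_{n+2}/2^{n+2}$ and $v^*=v_{n+2}/2^{n+2}$. Halving gives $f(u^*)=u^*+\alpha/2$. Here $\alpha/2\in S_{n+2}^*\cap\tfrac12 S_n$, which maps into $\langle 2^nu^*,2^nv^*\rangle\subset A_{S_{n+2}}$. Hence $\bar f=I+2^nM$ with $M\in M_2(\Z/4)$; this is the paper's quadruple $(a_1,b_1,a_2,b_2)\in\{0,1,2,3\}^4$.

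Now impose $q$ on $u^*$, $v^*$ and $u^*+v^*$. This suffices because $q(x+y)=q(x)+q(y)+2b(x,y)$. The conditions become
$a_1+2^{n-1}(a_1^2+b_1^2)\equiv 0 \pmod 4$,
$b_2+2^{n-1}(a_2^2+b_2^2)\equiv 0 \pmod 4$, and
$(a_1+a_2+b_1+b_2)+2^{n-1}\bigl((a_1+a_2)^2+(b_1+b_2)^2\bigr)\equiv 0 \pmod 4$.
Solving leaves exactly $8$ quadruples for $n=1$ and $4$ for $n\ge 2$. Hence the image of $B$ has order at most $8$, and the index is at most $8l$.
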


\begin{proof} Let $g \in {\rm O}''(S_{n+2})$. Since $g|_{S_{n+2}^{*}/S_{n+2}} = {\rm id}_{S_{n+2}^{*}/S_{n+2}}$, it follows that $g|_{S_{n}^*/S_{n+2}} = {\rm id}_{S_{n}^*/S_{n+2}}$ as well by $S_{n}^*/S_{n+2} \subset S_{n+2}^*/S_{n+2}$. Thus $g|_{S_{n}^*/S_{n}} = {\rm id}_{S_{n}^*/S_{n}}$ as well by $S_{n}^*/S_{n} = (S_{n}^*/S_{n+2})/(S_{n}/S_{n+2})$. Hence $g \in {\rm O}''(S_{n})$ and therefore ${\rm O}''(S_{n+2}) \subset {\rm O}''(S_{n})$.

\medskip

Let us show that $[{\rm O}''(S_{n}) : {\rm O}''(S_{n+2})] \le 8 l$.

\medskip

Note that ${\rm O}''(S_{n})$ acts equivariantly on $2S_{n} \subset S_{n}$ where $2S_{n} = \{2x\, |\, x \in S_{n}\}$. Then we obtain that
$$2S_{n} \subset S_{n+2} \subset S_{n},$$
as $S_{n}/S_{n+2} \simeq (\Z/2)^{\oplus 2}$, and that $S_{n}/2S_{n} \simeq (\Z/2)^{\oplus 20}$. Identify $\Z/2 = \F_2$. Then
$$\F_2^{\oplus 18} \simeq S_{n+2}/2S_{n} \subset S_{n}/2S_{n} \simeq \F_2^{\oplus 20}.$$
Therefore the orbit of $\{S_{n+2}\}$
$${\rm O}''(S_{n})\cdot \{S_{n+2}\} := \{g(S_{n+2})\, |\, g \in {\rm O}''(S_{n})\}$$
 is in one to one correspondence with some subset of the set ${\rm Gr}(18, 20)(\F_2)$ of $\F_2$ points of the Grassman variety ${\rm Gr}(18, 20)$.
Since
$$|{\rm Gr}(18, 20)(\F_2)| = |{\rm Gr}(2, 20)(\F_2)| = \frac{(2^{20} -1)(2^{20}-2)}{(2^2 -1)(2^2-2)} = l,$$
it follows that
$$|{\rm O}''(S_{n})\cdot \{S_{n+2}\}| \le l.$$
Let $B$ be the stabilizer of $\{S_{n+2}\}$ in ${\rm O}''(S_{n})$. As we already observed that ${\rm O}''(S_{n+2}) \subset {\rm O}''(S_{n})$, we have ${\rm O}''(S_{n+2}) \subset B={\rm O}''(S_{n})\cap {\rm O}'(S_{n+2})$. Hence
$$[{\rm O}''(S_{n}) : {\rm O}''(S_{n+2})] =k [{\rm O}''(S_{n}) : B] =k |{\rm O}''(S_{n})\cdot \{S_{n+2}\}|\le k l,$$
where $k:=[B:{\rm O}''(S_{n+2})]$. Thus, we are reduced to prove  $k\le 8$.

Note that each isometry $f\in B$ naturally induces an automorphism $\overline{f}\in {\rm O}(A_{S_{n+2}}, q_{A_{S_{n+2}}})$. Let $$H:=\{\overline{f}|\, f\in B\}\subset {\rm O}(A_{S_{n+2}}, q_{A_{S_{n+2}}}).$$ In order to prove $k\le 8$, it suffices to show that $|H|\le 8$.

Let $f\in B$. Since $f|_{A_{S_{n}}}={\rm id}_{A_{S_{n}}},$ we have $$f(\frac{u_n}{2^{n}})=\frac{u_n}{2^{n}}+\alpha \; \text{ and } \; f(\frac{v_n}{2^{n}})=\frac{v_n}{2^{n}}+\beta$$
for some $\alpha, \beta\in S_n$. Since $u_{n+2}=2u_n$ and $v_{n+2}=2v_n$, we have
$$f(\frac{u_{n+2}}{2^{n+2}})=\frac{u_{n+2}}{2^{n+2}}+\frac{\alpha}{2} \; \text{ and } \; f(\frac{v_{n+2}}{2^{n+2}})=\frac{v_{n+2}}{2^{n+2}}+\frac{\beta}{2}.$$ The elements $\frac{\alpha}{2}$ and $\frac{\beta}{2}$ are of order at most $4$ in $A_{S_{n+2}}$. Thus, we may rewrite
\begin{equation}\label{fbar}
\overline{f}(\frac{u_{n+2}}{2^{n+2}})=\frac{u_{n+2}}{2^{n+2}}+\frac{a_1 u_{n+2}}{4}+\frac{b_1 v_{n+2}}{4} \; \text{ and } \; \overline{f}(\frac{v_{n+2}}{2^{n+2}})=\frac{v_{n+2}}{2^{n+2}}+\frac{a_2 u_{n+2}}{4}+\frac{b_2 v_{n+2}}{4},
\end{equation}
where $a_i, b_i\in \{0,1,2,3\}$. For any pair of integers $(a, b)$, we have
\begin{equation}\label{qA}
q_{A_{S_{n+2}}}(a\frac{u_{n+2}}{2^{n+2}}+b\frac{v_{n+2}}{2^{n+2}})=\frac{a^2+b^2}{2^{n+2}}\;\; {\rm mod}\, 2\Z.
\end{equation}
Since $\overline{f}$ preserves the quadratic form $q_{A_{S_{n+2}}}$, by equations \eqref{fbar} and \eqref{qA}, we have
\begin{equation}
\frac{a^2+b^2}{2^{n+2}}=\frac{(a+2^{n}aa_1+2^{n}ba_2)^2+(2^{n}ab_1+b+2^{n}bb_2)^2}{2^{n+2}} \;\; {\rm mod}\, 2\Z.
\end{equation}
Then by direct computation via choosing $(a,b)=(1,0), (0,1), (1,1)$, we conclude that
\begin{enumerate}
\item If $n=1$, then the quadruple $(a_1,b_1,a_2,b_2)\in \{ (0, 0, 0, 0)$, $(0, 0, 0, 3)$, $(0, 2, 2, 0)$, $(0, 2, 2, 3)$, $(3, 0, 0, 0)$, $(3, 0, 0, 3), (3, 2, 2, 0), (3, 2, 2, 3)\}$;
\item If $n\ge 2$, then $(a_1,b_1,a_2,b_2)\in \{ (0,0,0,0), (0,1,3,0), (0,2,2,0), (0,3,1,0)\}$.
\end{enumerate}
This implies that $|H|\le 8$, which completes the proof of the proposition.
 \end{proof}

\begin{corollary}\label{cor11} Let $n\ge 1$ be an odd integer. Then ${\rm O}''(S_{n})$ is generated by $11\cdot (8l)^{\frac{n-1}{2}}$ elements, where $l$ is the constant in Theorem \ref{main2}. In particular, so is the quotient group $\Aut(V_n,\omega_{V_n}) = {\rm O}''(S_{n})/W(S_{n})$.
\end{corollary}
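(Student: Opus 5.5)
Write $n = 2m+1$ and argue by induction on $m \ge 0$. The claim to prove is $l({\rm O}''(S_{2m+1})) \le 11\cdot (8l)^m$.

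The base case $m=0$, i.e.\ $n=1$, is exactly Theorem \ref{thm11}. That theorem says ${\rm O}''(S_1)$ is generated by $11$ elements, and $11 = 11\cdot(8l)^0$.

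For the inductive step, assume ${\rm O}''(S_n)$ is generated by $11\cdot(8l)^{\frac{n-1}{2}}$ elements for some odd $n \ge 1$. By Proposition \ref{prop14}, the embedding $\iota_n$ realizes ${\rm O}''(S_{n+2})$ as a subgroup of ${\rm O}''(S_n)$ of index $s \le 8l$. Apply Proposition \ref{pp:Suzuki} with $N = {\rm O}''(S_n)$ and $N_1 = {\rm O}''(S_{n+2})$, using its weaker form $l(N_1) \le l(N)\,s$. This gives
$$l({\rm O}''(S_{n+2})) \le 11\cdot(8l)^{\frac{n-1}{2}}\cdot 8l = 11\cdot (8l)^{\frac{(n+2)-1}{2}},$$
which is the statement for $n+2$. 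The phrase ``generated by $N$ elements'' should be read as ``by at most $N$ elements''. If an exact count is wanted, pad the generating set with identities.

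For the quotient statement, use ${\rm O}''(S_{V_n}) = W(S_{V_n}) \rtimes \Aut(V_n,\omega_{V_n})$, established earlier in the section, together with $S_{V_n} \cong S_n$. Since $W(S_n)$ is normal, $\Aut(V_n,\omega_{V_n}) \cong {\rm O}''(S_n)/W(S_n)$. The images of generators of ${\rm O}''(S_n)$ generate this quotient, so the same bound holds for $\Aut(V_n,\omega_{V_n})$.

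I expect no genuine obstacle here. The one point to check is that the successive embeddings are compatible: the index bound of Proposition \ref{prop14} must be applied at each step $n \to n+2$ using the lattice $S_n$ itself, with its own $u_n, v_n$. This holds because the proposition is stated for every $n \ge 1$; its case (1) covers $n=1$, and its case (2) covers $n \ge 2$. The step $n \to n+2$ stays within odd integers, so the chain starting from $S_1$ reaches every odd $n$.
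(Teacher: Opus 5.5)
Your proof is correct and follows the paper's own argument. The paper also inducts on odd $n$, starts from Theorem \ref{thm11}, and passes from $n$ to $n+2$ using the index bound $[{\rm O}''(S_n):{\rm O}''(S_{n+2})]\le 8l$ of Proposition \ref{prop14} together with Proposition \ref{pp:Suzuki}; your derivation of the quotient statement from ${\rm O}''(S_n)=W(S_n)\rtimes\Aut(V_n,\omega_{V_n})$ supplies the routine step the paper leaves implicit.
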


\begin{proof} We prove the first assertion by induction on odd integers $n\ge 1$. The result follows for $n=1$ by Theorem \ref{thm11}. Assume that $n \ge 1$ and the result follows for $n$. Then ${\rm O}''(S_{n})$ is generated by $11\cdot (8l)^{\frac{n-1}{2}}$ elements. Since $[{\rm O}''(S_{n}) : {\rm O}''(S_{n+2})] \le 8l$ by Proposition \ref{prop14}, it follows from Proposition \ref{pp:Suzuki} that ${\rm O}''(S_{n+2})$ is generated by $8l \cdot 11\cdot (8l)^{\frac{n-1}{2}} = 11\cdot(8l)^{\frac{n+1}{2}}$ elements. Hence the result is also true for $n+2$. This completes the proof.
\end{proof}

Now we are ready to prove Theorem \ref{main2}.

\medskip

{\it Proof of Theorem \ref{main2}.} Consider the element $f_4 \in \Aut (E_{\sqrt{-1}} \times E_{\tau})$ with
$\tau \in \H$ relevant to $V_n$ ($n\ge 2$) defined by
$$(x,y) \mapsto (\sqrt{-1}x,y).$$
Here $(x,y)$ is the global coordinate of the universal cover of
$E_{\sqrt{-1}} \times E_{\tau}$.

Then $f_4$ descends to an element $g_4$ of order $4$ of $\Aut (V_n)$ such that
$$g_4^*\omega_{V_n} = \sqrt{-1}\omega_{V_n}$$
 for each $n \ge 2$. Here $\omega_{V_n}$ is a nowhere vanishing holomorphic $2$-form on $V_n$ induced from $dx \wedge dy$. Thus the order of the action of $\Aut (V_n)$ on $T_n$ is divisible by $4$. On the other hand, since ${\rm rank}\, T_{n} = 2$, the order $m$ of the action of $\Aut (V_n)$ on $T_{n}$ is at most $6$, as the Euler function $\varphi(m) \le 2$. Hence the order of the action of $\Aut (V_n)$ on $T_{n}$ is exactly $4$. Hence
$$\Aut (V_n) = \Aut(V_n, \omega_{V_n}) \rtimes \langle g_4 \rangle.$$
Combining this with Corollary \ref{cor11}, for odd $n\ge 3$,  $\Aut (V_n)$ is generated by exactly $11\cdot (8l)^{\frac{n-1}{2}} + 1$ elements, as claimed. \qed

\medskip

\section{Fundamental domains of finite index subgroups} \label{sect2}

Throughout this section, we fix finitely generated groups $\tilde{H}$, $G$ and a subgroup $H$ of both $\tilde{H}$ and $G$ such that $[\tilde{H} : H] < \infty$ and $[G : H] < \infty$. Let $\R^{\rho}$ be the euclidean space with $\rho \ge 2$. Assume that $G$ acts faithfully on a {\it connected} cone $P \subset \R^{\rho}$ with vertx at the origin $0$ ($0$ may not be in $P$) such that there is an {\it exact tessellation} $\mathcal{T}_G:=\{D_i\}_{i\in I}$ of the action of $G$ on $P$ by $D_i \subset P$ ($i \in I$) of dimension $\rho$, which is a cone over a closed finite polyhedron in $P$ with vertex $0$. That is, $G$ acts on the set $\mathcal{T}_G$ transitively and each $D_i$ ($i \in I$) is an {\it exact fundamental domain} of the action of $G$ on $P$ in the following sense:

\begin{enumerate}

\item $$P = \bigcup_{g \in G} g(D_i),$$

\item $G_i := \{g \in G\,|\, g(D_i) = D_i\}$ is a finite group ({\it not necessarilly trivial in our definition}),

\item $D_i \cap D_{i'}$ ($i' \not= i$, $i' \in I$) is either empty or exactly one face of codimension $\ge 1$, possibly $\{0\}$, of both $D_i$ and $D_{i'}$,

\item For each codimension one face $\Delta$ of $D_i$, there is a unique $i' \in I$ such that $D_i \cap D_{i'} = \Delta$.

\end{enumerate}

We say that $D_{i'}$ is adjacent to $D_{i}$ ($i' \not= i$) when $D_i \cap D_{i'}$ is a codimension one face of both $D_i$ and $D_{i'}$. 

\medskip

{\it Throughout this section, we will work under the above setting.}

\medskip

Our goal is to obtain an upper bound for the minimal number of generators $l(\tilde{H})$ and $l(H)$ of the groups $\tilde{H}$ and $H$ (Theorems \ref{thm:bound}, \ref{thm:sharperbound}, \ref{thm:bound2}). We deduce our estimates by giving an effective way of finding a fundamental domain for the action of $H$ on $P$ via $\mathcal{T}_G$. More precisely, we want to find a subset $J\subset I$ such that
\begin{enumerate}
\item[(1a)] $$P =\bigcup_{g\in H,\,j\in J}g(D_j)\,\, (= \bigcup_{g\in H}g(\bigcup_{j \in J} D_j));$$
\item[(2a)] if $j,j'\in J$ and $j\neq j'$, then $g(D_j)\neq D_{j'}$ for any $g\in H$.
\end{enumerate}
A subset $J\subset I$ is called {\it $H$-fundamental} if (1a) and (2a) hold. From the properties of the tessallation $\mathcal{T}_G$, we have a transitive group action
$$\Psi_G:\, G\times \mathcal{T}_G\rightarrow \mathcal{T}_G, \,\, (g, D_i)\mapsto g(D_i).$$
By restriction, we get an action of $H$ on $\mathcal{T}_G$.

\begin{lemma}\label{thm:}
An $H$-fundamental subset exists and it must be a finite set.
\end{lemma}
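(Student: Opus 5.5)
The plan is to read the statement in terms of the action $\Psi_G$ restricted to $H$. An $H$-fundamental subset is exactly a set of representatives of the $H$-orbits on $\mathcal{T}_G$. So existence reduces to choosing one representative in each orbit, and finiteness reduces to showing that there are only finitely many $H$-orbits on $\mathcal{T}_G$.

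For existence, I would decompose $\mathcal{T}_G$ into its $H$-orbits and pick one index $j$ in each orbit; let $J$ be the resulting set of indices. Condition (2a) holds automatically, since distinct chosen indices lie in distinct $H$-orbits. (If two indices $i\ne i'$ could give the same set $D_i=D_{i'}$, this would contradict property (3) of the tessellation; so indices and tiles correspond bijectively, and I work with tiles.) For (1a), I would take $x\in P$. By property (1) applied to a fixed $D_{i_0}$, there is $g\in G$ with $x\in g(D_{i_0})$. The tile $g(D_{i_0})$ lies in $\mathcal{T}_G$, so it equals some $D_i$. That $D_i$ lies in the $H$-orbit of a chosen $D_j$, say $D_i=h(D_j)$ with $h\in H$, and hence $x\in h(D_j)$.

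For finiteness, I would use that $G$ acts transitively on $\mathcal{T}_G$. Fixing $D_{i_0}$, the map $gG_{i_0}\mapsto g(D_{i_0})$ identifies $\mathcal{T}_G$ with $G/G_{i_0}$ as $G$-sets. The $H$-orbits on $\mathcal{T}_G$ then correspond to the double cosets $H\backslash G/G_{i_0}$. Each such double coset is a union of right cosets $Hg$, so the number of $H$-orbits is at most the number of right cosets of $H$ in $G$, namely $[G:H]<\infty$. It follows that $|J|\le [G:H]$. Finiteness of $G_{i_0}$ is not even needed here, though it would give the sharper count $|J|\ge [G:H]/|G_{i_0}|$ if that were wanted.

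I do not expect a serious obstacle. The only point needing care is the bookkeeping between the index set $I$ and the set of tiles $\mathcal{T}_G$. One must make sure that "$g(D_j)\neq D_{j'}$" in (2a) is a statement about tiles as sets, and that every $g(D_{i_0})$ really is a member $D_i$ of $\mathcal{T}_G$. Both follow from the definition of the action $\Psi_G$ and from property (3) of an exact tessellation.
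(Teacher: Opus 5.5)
Your proof is correct and follows the paper's argument. The paper likewise identifies $H$-fundamental subsets with sets of representatives of the $H$-orbits on $\mathcal{T}_G$, and gets finiteness of the number of orbits from $[G:H]<\infty$. You additionally write out the details the paper leaves implicit: the checks of (1a) and (2a), and the orbit count through $H\backslash G/G_{i_0}$, which the paper records separately in Proposition~\ref{prop:J}.
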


\begin{proof}
Since $[G: H]<\infty$, there are only finitely many orbits for the action of $H$ on $\mathcal{T}_G$. The sets of representatives of the orbits for this action are in one to one correspondence with the $H$-fundamental subsets of $I$. \end{proof}

We choose and fix one $i_0 \in I$ and set:
$$K := G_{i_0} < G,$$
$$k := |\{ \text{codimension one faces of}\,\, G_{i_0}\}|.$$
Note that the conjugacy class of $K$ in $G$ and the number $k$ are independent of the choice of $i_0$ by the definition of tessellation.
Note also that $H$ and $K$ are subgroups of $G$ and for each $g\in G$, the $(H, K)$-double coset of $g$ is the set
$$H g  K:=\{h g k|\, h\in H,\, k\in K\}.$$
We denote by $H\backslash G/K$ the set of all $(H,K)$-double cosets. $H$-fundamental subsets are closely related to double cosets.
\begin{proposition}\label{prop:J}
Let $J\subset I$ be an $H$-fundamental subset. Then
$$|J|=|H\backslash G/K|.$$
\end{proposition}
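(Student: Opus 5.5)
The plan is to identify $\mathcal{T}_G$ with the coset space $G/K$ as a $G$-set and then pass to $H$-orbits. By definition of a tessellation, $G$ acts transitively on $\mathcal{T}_G$ via $\Psi_G$, and the stabilizer of $D_{i_0}$ is $G_{i_0}=K$. Hence the orbit map
$$\phi : G/K \to \mathcal{T}_G,\qquad gK \mapsto g(D_{i_0}),$$
is a well-defined $G$-equivariant bijection. It is well defined and injective because $g(D_{i_0})=g'(D_{i_0})$ if and only if $g^{-1}g'\in K$, and it is surjective by transitivity.

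Next, restrict the action to $H$. Since $\phi$ is $H$-equivariant, it induces a bijection between the $H$-orbits on $G/K$ and the $H$-orbits on $\mathcal{T}_G$. The $H$-orbit of $gK$ in $G/K$ is $\{hgK \mid h\in H\}$, whose union is exactly the double coset $HgK$. So the $H$-orbits on $G/K$ correspond bijectively to $H\backslash G/K$ via $H\cdot gK\mapsto HgK$. This map is well defined, and it is injective since $HgK=Hg'K$ means $g'=hgk$, that is, $g'K=h\,gK$.

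Finally, I relate $J$ to the $H$-orbits on $\mathcal{T}_G$, using the observation in the proof of Lemma \ref{thm:}: an $H$-fundamental subset is the same thing as a set of representatives of these orbits.
\begin{itemize}
\item Condition (2a) says the tiles $D_j$, $j\in J$, lie in pairwise distinct $H$-orbits.
\item Condition (1a) says every tile lies in the $H$-orbit of some $D_j$.
\end{itemize}
For (1a) I argue as follows. Take any $D_i$ and an interior point $x$ of it. By (1a), $x\in h(D_j)$ for some $h\in H$ and $j\in J$. Since $h(D_j)$ is itself a tile, property (3) of the tessellation forces $h(D_j)=D_i$, because distinct tiles meet only along faces of codimension $\ge 1$ and so cannot share an interior point. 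Therefore $j\mapsto H\cdot D_j$ is a bijection from $J$ onto $H\backslash\mathcal{T}_G$, and
$$|J|=|H\backslash \mathcal{T}_G|=|H\backslash G/K|.$$

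The only mildly delicate step is this last deduction from (1a) to "every tile lies in some orbit $H\cdot D_j$", which needs the interior-point argument together with the facts that tiles are full-dimensional and that property (3) holds. Finiteness of both sides then follows from $[G:H]<\infty$, though the equality itself does not need it.
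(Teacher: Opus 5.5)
Your proof is correct and takes essentially the same approach as the paper. The paper defines the bijection $[g(D_{i_0})]\mapsto HgK$ from $H$-orbits on $\mathcal{T}_G$ to $H\backslash G/K$ directly, whereas you factor the same map through $G/K$, and both arguments then use that an $H$-fundamental subset is exactly a set of orbit representatives. Your interior-point argument, showing via property (3) that (1a) forces every tile into some orbit $H\cdot D_j$, supplies a detail the paper leaves implicit in the proof of the preceding lemma.
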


\begin{proof}
Let $\mathcal{O}$ be the set of all orbits for the action of $H$ on $\mathcal{T}_G$. We define a map
$$\varphi: \mathcal{O}\rightarrow H\backslash G/K,\,\, [g(D_{i_0})]\mapsto HgK, \, \forall g\in G.$$
We need to check the well-definedness. If $[g(D_{i_0})]=[g'(D_{i_0})]$, then there exists $h\in H$ such that $h(g(D_{i_0}))=g'(D_{i_0})$. Thus, $g'^{-1}hg(D_{i_0})=D_{i_0}$ and $g'^{-1}hg\in K$. Then $g$ and $g'$ are in the same $(H,K)$-double coset and $\varphi$ is well-defined. In a similar way, one can check that $\varphi$ is injective. The surjectivity of $\varphi$ is clear.
\end{proof}

Note that $|H\backslash G/K|=|K\backslash G/H|$. Let $G/H$ be the set of all left cosets of $H$. Consider the action $K\times G/H\rightarrow G/H$, $(k, gH)\mapsto kgH$. Then the number of all orbits of this action is equal to $|K\backslash G/H|$.  Each $H$-fundamental subset $J$ of $I$  gives a (not necessarily connected) fundamental domain for the action of $H$ on $P$ in the sense of (1a)
and (2a). As remarked above, each $D_j$ $(j\in J)$ is bounded by $k$ codimension one faces. We now give a bound of the number of generators of $H$. 

In order to state our estimates, we introduce the following two subsets of $J \times I$:
$$\mathcal{A}_J:=\{(j,i) \in J \times I |\, D_j \text{ is adjacent to } D_i \},$$
$$\mathcal{N}_J:=\{(j,j') \in J \times J |\, D_j \text{ is adjacent to } D_{j'} \}.$$
\begin{theorem}\label{thm:bound}
Let $J\subset I$ be an $H$-fundamental subset. Then $H$ is finitely generated and
$$l(H)\le |{\mathcal A}_J| - |{\mathcal N}_J| + l(K\cap H).$$
In particular, setting $m = |J| = |H\backslash G/K|$ (cf. Proposition \ref{prop:J}) and recalling that $k$ is the cardinality of codimension one faces of any $D_i$ ($i \in I$), we have
$$l(H)\le mk+l(K\cap H).$$
\end{theorem}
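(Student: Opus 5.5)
This is the standard Poincaré–Siegel argument for polyhedral fundamental domains: the generators will be the face-pairing elements of the domain $D_J:=\bigcup_{j\in J}D_j$ together with generators of the stabilizer $K\cap H$. The first step is a normalization. Since $G$ acts transitively on $\mathcal{T}_G$, we may choose $J$ to contain $i_0$: replace the representative of the $H$-orbit of $D_{i_0}$ by $D_{i_0}$ itself. A different choice of $J$ does not change the counts $|\mathcal A_J|$ and $|\mathcal N_J|$ in any essential way, because $H$ preserves adjacency. We would also like $D_J$ to be connected, but this cannot always be arranged, so the argument should not depend on it.

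Next we define the candidate generators. For each $(j,i)\in\mathcal A_J$ with $i\notin J$, the tile $D_i$ lies in the $H$-orbit of a unique $D_{j'}$ with $j'\in J$, by (1a) and (2a). Choose $h_{j,i}\in H$ with $h_{j,i}(D_{j'})=D_i$. If $i\in J$, so that $(j,i)\in\mathcal N_J$, no element is needed. The count of such pairs is $|\mathcal A_J|-|\mathcal N_J|$. For each $j\in J$ we also fix $g_j\in G$ with $g_j(D_{i_0})=D_j$; these are used only for bookkeeping. Let $H_0$ be the subgroup generated by all the $h_{j,i}$ together with a minimal generating set of $K\cap H=\{h\in H: h(D_{i_0})=D_{i_0}\}$. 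The total number of generators is $|\mathcal A_J|-|\mathcal N_J|+l(K\cap H)$. For the second inequality, note that each $D_j$ has exactly $k$ codimension-one faces, and by property (4) each face gives exactly one adjacent tile. Hence $|\mathcal A_J|=mk$, and $|\mathcal N_J|\ge 0$.

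The main step is to show $H_0=H$, by a gallery argument. Take $h\in H$. The tile $h(D_{i_0})$ is joined to $D_{i_0}$ by a finite chain of successively adjacent tiles $D_{i_0}=E_0,E_1,\dots,E_r=h(D_{i_0})$. Such a chain exists because $P$ is connected and the tessellation is locally finite in the interior. Concretely, join an interior point of $D_{i_0}$ to an interior point of $h(D_{i_0})$ by a path in general position, so that it avoids all faces of codimension $\ge 2$; by compactness it crosses only finitely many tiles.

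We then prove by induction on $t$ that $E_t=x_t(D_{j_t})$ for some $x_t\in H_0$ and $j_t\in J$. For $t=0$ take $x_0=1$ and $j_0=i_0$. For the inductive step, $x_t^{-1}(E_{t+1})$ is adjacent to $D_{j_t}$, so it equals some $D_i$ with $(j_t,i)\in\mathcal A_J$. If $i\in J$, set $x_{t+1}=x_t$ and $j_{t+1}=i$. Otherwise $D_i=h_{j_t,i}(D_{j'})$, and we set $x_{t+1}=x_t h_{j_t,i}$ and $j_{t+1}=j'$.

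At the end, $h(D_{i_0})=x_r(D_{j_r})$. By (2a), $j_r$ and $i_0$ must coincide, since $D_{j_r}$ and $D_{i_0}$ lie in the same $H$-orbit. Therefore $x_r^{-1}h\in K\cap H\subset H_0$, and so $h\in H_0$. The same argument shows that $H$ is finitely generated.

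The step I expect to be the real obstacle is justifying the existence of the adjacency chain. This requires local finiteness of $\mathcal T_G$ in the interior of $P$, together with the claim that a generic path meets only codimension-one faces. Both should follow from properties (1)–(4) combined with discreteness of the $G$-action, since each tile is a cone over a finite polyhedron. In our applications $G$ is discrete in ${\rm O}(S_{\R})$, and I would invoke this fact there.
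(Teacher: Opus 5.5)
Your strategy is the paper's. The generators are face-pairing elements of $H$ for $(j,i)\in\mathcal A_J\setminus\mathcal N_J$, together with generators of a tile stabilizer in $H$. Your gallery induction is an explicit version of the paper's claim: the set $I'$ of $\Gamma$-translates of tiles in $J$ is closed under adjacency, hence equals $I$ by connectivity of $P$. The paper also takes the local finiteness you flag for granted.

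\textbf{The step that fails as written is your normalization $i_0\in J$.} You replace the representative $D_{j_0}$ of the $H$-orbit of $D_{i_0}$ by $D_{i_0}=g_0(D_{j_0})$. This keeps $|\mathcal A_J|=mk$, but it need not preserve $|\mathcal N_J|$. An element of $H$ preserves adjacency when applied to both tiles of a pair. Here, however, you move one tile of $J$ and leave the others fixed, so adjacencies between $D_{j_0}$ and the other $D_j$, $j\in J$, can be destroyed or created. So the claim that the counts do not change ``because $H$ preserves adjacency'' is false. What you actually prove is the refined inequality for the modified set $J'$, not for the given $J$; the $mk$ bound is unaffected. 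This matters because the statement concerns an arbitrary $H$-fundamental $J$, and the refined bound is later used for a specific computer-chosen $J$ with $|\mathcal N_J|=232$.

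\textbf{The repair is what the paper does: leave $J$ alone.}
\begin{enumerate}
\item Let $j_0\in J$ be the unique index with $g_0(D_{j_0})=D_{i_0}$ for some $g_0\in H$.
\item Put generators of $G_{j_0}\cap H$ into your generating set instead of those of $K\cap H$.
\item Run the gallery from $D_{j_0}$ to $h(D_{j_0})$. At the end, (2a) forces $j_r=j_0$, so $x_r^{-1}h\in G_{j_0}\cap H=g_0^{-1}(K\cap H)g_0$.
\item Since $g_0\in H$, this group is isomorphic to $K\cap H$, so $l(G_{j_0}\cap H)=l(K\cap H)$, and you recover exactly the stated count $|\mathcal A_J|-|\mathcal N_J|+l(K\cap H)$.
\end{enumerate}
Note that you cannot instead re-choose $i_0$ to be an arbitrary element of $J$. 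The stabilizers of tiles in different $H$-orbits are only conjugate by elements of $G$, and such a conjugation need not preserve $H$. That is why the specific $j_0$ in the $H$-orbit of $D_{i_0}$ is needed.
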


\begin{proof}
Let $n:=l(K\cap H)$. Note that $|\mathcal{A}_J| = mk$ by the definition of $\mathcal{A}_J$. Then the second inequality follows from the first inequality. Let us show the first inequality.

By (1a), for each pair $(j,i)\in \mathcal{A}_J$, we may choose an element $g_{(j,i)}\in H$ such that $g_{(j,i)}(D_{j'})=D_i$ for some $j'\in J$. Note that if $(j, j') \in \mathcal{N}_J$, then, as we have ${\rm id}_{H}(D_{j'}) = D_{j'}$ and $j' \in J$, we may choose $g_{(j,j')} = {\rm id}_{H}$ and thus we can exclude them from any set of generators of any subgroup of $H$. By (1a) and (2a), there exists a unique $j_0\in J$ such that $g_0(D_{j_0})=D_{i_0}$ for some $g_0\in H$. Let $B_{j_0}$ be a set of generators of $G_{j_0}\cap H$ with $|B_{j_0}|=l(G_{j_0}\cap H)$. We have $|B_{j_0}| = n$ as $g_0(G_{j_0} \cap H) g_0^{-1} = G_{i_0} \cap H$ by the choice of $g_0$. We define
$$B:= \{g_{(j,i)}|\, (j,i)\in \mathcal{A}_J\}\cup B_{j_0}\,\, ,\,\, \Gamma := \langle B \rangle.$$
Here, in the first subset of $B$, the cardinality of the set of indices $(j,i)$ in the first subset of $B$ is exactly $|{\mathcal A}_J|$ and among them we have at least  $|{\mathcal N}|$ elements such that $g_{(j,i)} = {\rm id}_H$ corresponding to the subset $|{\mathcal N}|$ as remarked above.  So, it suffices to show that the subgroup $\Gamma$ generated by $B$ is equal to $H$.

Consider the set
$$I':=\{i\in I|\, D_i=g(D_j) \text{ for some }g\in \Gamma\text{ and some }j\in J\} (\subset I).$$
Obviously, $J\subset I'$. We claim that

\medskip

{\it if $i'\in I'$ and $D_{i}$ is adjacent to $D_{i'}$ across a hyperplane bounding $D_{i}$ and $D_{i'}$, then $i\in I'$.}

\medskip

In fact, by $i'\in I'$, there exist $g\in \Gamma$ and $j\in J$ such that $g(D_j)=D_{i'}$. Then $g^{-1}(D_i)$ is adjacent to $g^{-1}(D_{i'})=D_j$. Note that $g^{-1}(D_i)=D_{i''}$ for some $i''\in I$ and $(j,i'')\in \mathcal{A}_J$. Thus, $g_{(j,i'')}(D_{j'})=D_{i''}$ for some $j'\in J$. Then $g g_{(j,i'')}(D_{j'})=D_i$ and $i\in I'$. This proves the claim.

By the claim and by the {\it connectivity} of $P$ assumed at the beginning, we infer that $I'=I$.

Let $h\in H$. By $I'=I$,  we have $h(D_{j_0})=g_1(D_{j_1})$ for some $j_1\in J$ and some $g_1\in\Gamma$. Then $g_1^{-1}h(D_{j_0})=D_{j_1}$. By (2a), $j_0=j_1$ and $g_1^{-1}h\in G_{j_0}\cap H$. Thus, $h\in \Gamma$ by the definition of $B$ and $\Gamma$. This completes the proof. \end{proof}

\begin{remark}
The proof provides a method to obtain an explicit set $B$ of generators of $H$. The bound in the theorem is not optimal in general.
\end{remark}

We also have the following refined bound of $l(H)$, which turns out to be useful in our application in the next section:

\begin{theorem}\label{thm:sharperbound}
Let $J$ be an $H$-fundamental subset with $m = |J| = |H\backslash G/K|$ as in the previous theorem. Let
$$J := \{j_1, j_2, \ldots, j_m\} \subset I,$$
$$H_{j_t} := G_{j_t} \cap H$$
where $G_{j_t} = \{g \in G\, |\, g(D_{j_t}) = D_{j_t}\}$ ($1 \le t \le m$) is the stabilizer of $D_{j_t}$. Then $H_{j_t}$ acts on the (finite) set of codimension one faces of $D_{j_t}$. We denote by $a_t$ the cardinality of the orbits of this action. Then
$$l(H)\le \sum_{t=1}^{m} (l(H_{j_t}) + a_t) + l(K\cap H)  - |{\mathcal N}_J|.$$
In particular,
$$l(H)\le \sum_{t=1}^{m} (l(H_{j_t}) + a_t) + l(K\cap H).$$
\end{theorem}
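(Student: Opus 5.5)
The plan is to rerun the proof of Theorem \ref{thm:bound}, but with a smaller generating set: we use the stabilizers $H_{j_t}$ to cut down how many face-crossing elements are needed. For each $t$, fix a generating set $B_t$ of $H_{j_t}$ with $|B_t| = l(H_{j_t})$. Next, choose representatives $\Delta_{t,1},\dots,\Delta_{t,a_t}$ of the $H_{j_t}$-orbits on the codimension one faces of $D_{j_t}$. For each such face, condition (4) of the tessellation gives a unique $i$ with $D_{j_t}\cap D_i=\Delta_{t,s}$. By (1a), there is $g_{t,s}\in H$ with $g_{t,s}(D_{j'})=D_i$ for some $j'\in J$; when $i$ itself lies in $J$ we take $g_{t,s}={\rm id}$. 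Also choose $B_{j_0}$ generating $G_{j_0}\cap H$ with $|B_{j_0}|=l(K\cap H)$, exactly as in Theorem \ref{thm:bound}. Set
$$\Gamma:=\langle \textstyle\bigcup_t B_t\cup\{g_{t,s}\}\cup B_{j_0}\rangle .$$
If $J$ is numbered so that $j_0$ is one of the $j_t$, then $B_{j_0}$ is redundant. We keep it anyway to match the stated bound.

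The key step is to show that for every $t$ and every pair $(j_t,i)\in\mathcal{A}_J$, there is an element of $\Gamma$ carrying some $D_{j'}$ ($j'\in J$) onto $D_i$. Let $\Delta=D_{j_t}\cap D_i$ be the shared face. There is $h\in H_{j_t}$ with $h(\Delta_{t,s})=\Delta$ for some $s$. Because $h$ fixes $D_{j_t}$, it sends the unique tile adjacent to $D_{j_t}$ across $\Delta_{t,s}$ to the unique tile adjacent across $\Delta$, namely $D_i$. Hence $h g_{t,s}(D_{j'})=D_i$ with $hg_{t,s}\in\Gamma$, since $h\in\langle B_t\rangle$. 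In other words, the elements $g_{(j,i)}$ used in Theorem \ref{thm:bound} can all be replaced by elements of $\Gamma$. The claim in that proof (that $I'$ is closed under adjacency) and its conclusion $I'=I$ by connectivity of $P$ then go through verbatim. The final step, where $h\in H$ is written as an element of $\Gamma$ times an element of $G_{j_0}\cap H$, is unchanged, so $\Gamma=H$.

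It remains to count the generators. The set has at most $\sum_t(l(H_{j_t})+a_t)+l(K\cap H)$ elements. For the sharper bound we subtract the trivial choices $g_{t,s}={\rm id}$: these occur whenever the neighbour across $\Delta_{t,s}$ is some $D_{j'}$ with $j'\in J$. The main obstacle is justifying the full subtraction of $|\mathcal{N}_J|$, because $\mathcal{N}_J$ counts adjacent pairs while we only chose orbit representatives. Each pair $(j_t,j')\in\mathcal{N}_J$ has a face of $D_{j_t}$, and the orbit of that face contains some $\Delta_{t,s}$. By (2a) the neighbour across $\Delta_{t,s}$ is then $h^{-1}(D_{j'})$, which is again $D_{j''}$ with $j''\in J$ only when $h$ preserves the tiles indexed by $J$.

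To handle this I would first try to pick the representatives $\Delta_{t,s}$ to be the faces shared with $J$-tiles themselves. Then each pair in $\mathcal{N}_J$ yields a trivial generator, provided distinct pairs lie in distinct orbits. Where two such pairs fall in the same orbit, the corresponding redundancy can instead be absorbed into the $l(H_{j_t})$ term, or, failing that, into the count of the unused $B_{j_0}$. If this bookkeeping cannot be made exact, the ``in particular'' bound still holds unconditionally, since it needs no subtraction at all.
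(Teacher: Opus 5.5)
Your proof that $\Gamma=H$ is the same as the paper's. You move the shared face $D_{j_t}\cap D_i$ to an orbit representative by some $h\in H_{j_t}$, use uniqueness in (4) to get $h(D_{i_{t,c}})=D_i$, take $h\,g_{(j_t,i_{t,c})}\in\Gamma$, and rerun the connectivity argument of Theorem \ref{thm:bound}. This correctly proves the second (``in particular'') inequality. The gap is in the first inequality: you never show that $|\mathcal{N}_J|$ of the face-crossing generators can be discarded. You raise the possibility that two pairs of $\mathcal{N}_J$ have faces in the same $H_{j_t}$-orbit, and propose to ``absorb'' this into $l(H_{j_t})$ or into $B_{j_0}$. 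That is not an argument. Nothing relates the number of such collisions to $l(H_{j_t})$, and $B_{j_0}$ has only $l(K\cap H)$ elements, possibly none. As written, your proposal establishes only the weaker bound.

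The missing observation is that such collisions cannot occur. Suppose $(j_t,j'),(j_t,j'')\in\mathcal{N}_J$ with $\Delta'=D_{j_t}\cap D_{j'}$, $\Delta''=D_{j_t}\cap D_{j''}$, and $h(\Delta')=\Delta''$ for some $h\in H_{j_t}$. Then $h(D_{j'})\cap D_{j_t}=h(D_{j'}\cap D_{j_t})=\Delta''$, so uniqueness in (4) gives $h(D_{j'})=D_{j''}$. Since $h\in H$, condition (2a) forces $j'=j''$, and then $\Delta'=\Delta''$ by (3). Hence each $H_{j_t}$-orbit of codimension one faces of $D_{j_t}$ contains at most one face shared with a tile $D_{j'}$, $j'\in J$. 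So the pairs of $\mathcal{N}_J$ with first entry $j_t$ lie in pairwise distinct orbits. Now pick that shared face as the representative whenever an orbit contains one, which was your first idea. This gives $i_{t,c}\in J$, and hence $g_{(j_t,i_{t,c})}=\mathrm{id}$, for exactly $|\mathcal{N}_J|$ pairs $(t,c)$, which yields the sharper bound.

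Even with arbitrary representatives the count survives. If the representative is $h^{-1}(\Delta')$, then $h^{-1}\in H_{j_t}$ itself carries $D_{j'}$ onto the neighbouring tile, so that generator lies in $\langle B_t\rangle$ and can be dropped. The paper's printed proof does not spell out this count either; it simply excludes the identity choices ``as before.'' With this lemma your argument would be complete.

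Your remark that $B_{j_0}$ is redundant is correct, and holds unconditionally, since $j_0\in J$ by construction.
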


\begin{proof} For each $1 \le t \le m$, we set $b_t = l(H_{j_t})$ and choose generators of $H_{j_t}$ with cardinality $b_t$ and denote them by
$$h_{j_t, 1}, h_{j_t, 2}, \ldots, h_{j_t, b_t}.$$
Let us write the orbits of the action of $H_{j_t}$ on the set of codimension one faces of $D_{j_t}$ by
$$O_{j_t, 1}, O_{j_t, 2}, \ldots, O_{j_t, a_t}$$
and choose a set of complete representatives as
$$\Delta_{j_t, 1}, \Delta_{j_t, 2}, \ldots, \Delta_{j_t, a_t},$$
where $\Delta_{j_t, c} \in O_{j_t, c}$.
For each $1 \le c \le a_t$, we denote by $D_{i_{t, c}}$ the unique element of $T_G =\{D_i\}_{i \in I}$ such that
$$D_{i_{t,c}} \cap D_{j_t} = \Delta_{j_t, c}\,\, (i_{t, c} \in I)$$
Here, the existence and uniqueness of $D_{i_{t, c}}$ follows from the definition of tessellation. By the definition of $\mathcal{A}_J$, we have $(j_t, i_{t,c}) \in \mathcal{A}_J$.

By (1a), for each pair $(j_t, i_{t,c}) \in \mathcal{A}_J$, we may choose an element $g_{(j_t, i_{t,c})}\in H$ such that $g_{(j_t, i_{t,c})}(D_{j'})=D_{i_{t, c}}$ for some $j'\in J$. As before, if $i_{t, c} \in J$, we may and will choose $j' = i_{t, c}$ and $g_{i_t,c}= {\rm id}_{H}$ and thus we can exclude them from any set of generators of any subgroup of $H$. By slightly abuse of notation, we choose $j_0 \in J$ (which is indeed $j_t$ for some $t$ with $1 \le t \le m$) such that $h(D_{j_0}) = D_{i_0}$ for some $h \in H$ as before and let $B_{j_0}$ be a set of generators of $G_{j_0}\cap H$ with $|B_{j_0}|=l(G_{j_0}\cap H)$. We define
$$B' := \bigcup_{1 \le t \le m, 1 \le c \le a_t} \{g_{(j_t,i_{t, c})}\}\,\, \bigcup \bigcup_{1 \le t \le m, 1 \le d \le b_t} \{h_{j_{t}, d}\}\,\, \bigcup B_{j_0}\,\, (\subset H)\,\, ,\,\, \Gamma' := \langle B' \rangle \subset H.$$

Then, exactly for the same reason as in the proof of the previous theorem, it suffices to show that $\Gamma' = H$. For this, under the description of $H$ in the previous theorem, it suffices to show the following claim:

\medskip

{\it We may choose $g_{(j, i)} \in \Gamma'$ for all $(j, i) \in {\mathcal A}_J$.}

\medskip

We now show the claim. Set $j = j_t$ ($1 \le t \le m$).

By the definition of $g_{(j_t, i)}$, there is $j' \in J$ such that
$$g_{(j_t, i)}(D_{j'}) = D_i.$$
Let $\Delta$ be the unique codimension one face of both $D_{j_t}$ and $D_i$ such that $\Delta = D_{j_t} \cap D_i$. Then, there are $h \in H_{j_t}$ and $1 \le c \le a_t$ such that $\Delta = h(\Delta_{j_t, c})$ by the definition of the set $\{\Delta_{j_t, c}\}_{c=1}^{a_t}$. Since $h(D_{j_t}) = D_{j_t}$, by the uniqueness of $D_i$, we obtain $h(D_{i_{t, c}}) = D_i$.  On the other hand,
$$g_{(j_t, i_{t, c})}(D_{j''}) = D_{i_{t, c}}$$
for some $j'' \in J$ by $(j_t, i_{t, c}) \in {\mathcal A}_J$. Applying $h$ on the both sides and using $h(D_{i_{t, c}}) = D_i$, we obtain
$$hg_{(j_t, i_{t, c})}(D_{j''}) = h(D_{i_{t, c}}) = D_i.$$
For any choice of $g_{(j_t, i)}$, there is $j' \in J$ such that $g_{(j_t, i)}(D_{j'}) = D_i$ with $j' \in J$. Then $D_{j'} = D_{j''}$ by $j', j'' \in J$ and by the condition (2a) of an $H$-fundamental set $J$. Then we may just choose
$$g_{(j_t, i)} = hg_{(j_t. i_{t, c})} \in \Gamma'.$$
This completes the proof. \end{proof}

From now on, we estimate $l(\tilde{H})$ from the above.

\begin{theorem}\label{thm:bound2} Under the setting at the beginning of this section, we have:
$$l(\tilde{H}) \le l(H) + [\tilde{H} : H] -1.$$
In particular, under the notation in Theorem \ref{thm:bound}, we have an effective estimate
$$l(\tilde{H}) \le mk+ l(K\cap H) + [\tilde{H} : H] -1.$$
\end{theorem}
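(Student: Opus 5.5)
We prove the first inequality by a coset-representative argument, since $H$ is a subgroup of $\tilde{H}$ of finite index $s := [\tilde{H} : H]$. The second inequality then follows by substituting the bound $l(H)\le mk+l(K\cap H)$ of Theorem \ref{thm:bound}, which applies because $H$ is also a finite-index subgroup of $G$ and $J$ is an $H$-fundamental subset.

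Choose a set $B_H \subset H$ of generators of $H$ with $|B_H| = l(H)$. Choose left coset representatives $t_1 = {\rm id}, t_2, \ldots, t_s$ of $H$ in $\tilde{H}$, so that $\tilde{H} = \bigsqcup_{c=1}^{s} t_c H$. Set $B := B_H \cup \{t_2, \ldots, t_s\}$. We have $|B| \le l(H) + s - 1$, because the trivial representative $t_1$ is not needed. It remains to show that $\langle B \rangle = \tilde{H}$.

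Let $\Gamma := \langle B \rangle$. Since $B_H \subset B$, we have $H \subset \Gamma$. Any $x \in \tilde{H}$ lies in some coset $t_c H$, so $x = t_c h$ with $h \in H$. Since $t_c \in \Gamma$ and $h \in \Gamma$, we get $x \in \Gamma$. Hence $\Gamma = \tilde{H}$ and
$$l(\tilde{H}) \le l(H) + [\tilde{H}:H] - 1.$$

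There is no real obstacle; the only care needed is to note that the identity coset contributes no generator, which gives the $-1$. Finite generation of $H$ is supplied by Theorem \ref{thm:bound}, so $l(H)$ is finite and the bound is meaningful.
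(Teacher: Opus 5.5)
Your proposal is correct and is essentially the paper's own argument: generate $\tilde{H}$ by a minimal generating set of $H$ together with a complete set of representatives of $\tilde{H}/H$ with the identity removed. The second inequality then follows by substituting the bound $l(H)\le mk+l(K\cap H)$ from Theorem \ref{thm:bound}, exactly as the paper does.
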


\begin{remark} We obtain a more refined estimate of $l(\tilde{H})$ by substituting the estimate $l(H)$ in Theorem \ref{thm:sharperbound}.
\end{remark}

\begin{proof} Since $\tilde{H}$ is generated by the generators of $H$ and the complete representatives of $\tilde{H}/H$ with $1_{\tilde{H}}$ removed, the result follows.
\end{proof}

\begin{corollary}\label{cor:boundgeneral} Let $\tilde{L}$ be a group. Under the setting at the beginning of this section, we assume that there is a normal subgroup $\tilde{W}$ of $\tilde{H}$ and the group homomorphism
$$p : \tilde{L} \to \tilde{H}/\tilde{W}$$
such that $|{\rm Ker}\, (p)| < \infty$ and $s := [(\tilde{H}/\tilde{W}) : {\rm Im}\, (p)] < \infty$. Then
$$l(\tilde{L}) \le l({\rm Ker}\, (p)) + s(l(\tilde{H}/\tilde{W})-1) + 1.$$ In particular,
$$l(\tilde{L}) \le l({\rm Ker}\, (p)) + s(l(\tilde{H})-1) + 1.$$
\end{corollary}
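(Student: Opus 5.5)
We prove Corollary \ref{cor:boundgeneral} by passing through the short exact sequence attached to $p$. Set $Q := \tilde{H}/\tilde{W}$ and $M := {\rm Im}\,(p) < Q$. Then
$$1 \to {\rm Ker}\,(p) \to \tilde{L} \to M \to 1$$
is exact. The plan is to bound $l(M)$ using Proposition \ref{pp:Suzuki}, and then lift generators of $M$ to $\tilde{L}$.

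For the first step, note that $Q$ is finitely generated, since it is a quotient of the finitely generated group $\tilde{H}$. Moreover $[Q:M] = s < \infty$. Proposition \ref{pp:Suzuki}, applied with $N = Q$ and $N_1 = M$, gives
$$l(M) \le s\,(l(Q)-1) + 1.$$
The degenerate case $Q$ trivial needs a quick check: there $l(Q)=0$ and $s=1$, so the bound reads $l(M)\le 0$, which is correct because $M$ is also trivial. Beyond that, one should make sure Proposition \ref{pp:Suzuki} is being used only for finite-index subgroups of finitely generated groups, and that is exactly the situation here.

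For the second step, choose generators $m_1,\dots,m_r$ of $M$ with $r = l(M)$, and pick lifts $\ell_1,\dots,\ell_r \in \tilde{L}$ with $p(\ell_i)=m_i$. Let ${\rm Ker}\,(p)$ be generated by $k_1,\dots,k_q$ with $q = l({\rm Ker}\,(p))$; this number is finite because ${\rm Ker}\,(p)$ is a finite group. Now take any $x\in\tilde{L}$ and write $p(x)$ as a word in the $m_i$. Replacing each $m_i$ by $\ell_i$ gives an element $y$ with $p(y)=p(x)$. Hence $y^{-1}x\in{\rm Ker}\,(p)$, so $x$ lies in the subgroup generated by the $\ell_i$ and the $k_j$. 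It follows that
$$l(\tilde{L}) \le l({\rm Ker}\,(p)) + l(M) \le l({\rm Ker}\,(p)) + s\,(l(\tilde{H}/\tilde{W})-1)+1,$$
which is the first inequality.

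For the ``in particular'' statement, note that $l(\tilde{H}/\tilde{W}) \le l(\tilde{H})$, since the images of generators of $\tilde{H}$ generate the quotient. Because $s \ge 1$, the right-hand side is monotone in $l(\tilde{H}/\tilde{W})$, so the second inequality follows. Here $l(\tilde{H})$ is finite because $\tilde{H}$ is finitely generated by the standing assumption; Theorem \ref{thm:bound2} additionally supplies an effective bound for it.

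No step here is a real obstacle. The only point needing care is the bookkeeping with $l(N)-1$ in Proposition \ref{pp:Suzuki}, namely the degenerate trivial case and the monotonicity used in the final step, both handled above.
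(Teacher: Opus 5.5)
Your proposal is correct and follows the paper's proof essentially verbatim. Like the paper, you bound $l({\rm Im}\,(p))$ via Proposition \ref{pp:Suzuki}, use the exact sequence $1 \to {\rm Ker}\,(p) \to \tilde{L} \to {\rm Im}\,(p) \to 1$ to get $l(\tilde{L}) \le l({\rm Ker}\,(p)) + l({\rm Im}\,(p))$, and deduce the second inequality from $l(\tilde{H}/\tilde{W}) \le l(\tilde{H})$; your lifting argument, the trivial-quotient check and the monotonicity remark using $s \ge 1$ simply spell out details the paper leaves implicit.
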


\begin{proof} Let $K = {\rm Ker}\, (p)$ and $M := {\rm Im}\, (p)$.
Then $s = [\tilde{H}/\tilde{W} : M]$ and thus
$$l(M) \le s(l(\tilde{H}/\tilde{W})-1) + 1$$
by Proposition \ref{pp:Suzuki}. By the exact equence of the groups
$$1 \to K \to \tilde{L} \to M \to 1$$
it follows that $l(\tilde{L}) \le l(K) + l(M)$. This implies the first inequality. The second inequality is then clear as $l(\tilde{H}/\tilde{W}) \le l(\tilde{H})$.
\end{proof}

\section{Proof of Theorem \ref{main3}} \label{sect3}

In this section, we apply our bounds in the previous section to prove Theorem \ref{main3}.

We use the same notation of lattices as in Section \ref{sect1}. For instance, recall that $S_1$, $S_2$ and $S_3$ are the N\'{e}ron-Severi lattices of the K3 surfaces $V_1=X_4$, $V_2={\rm Km}\, (E_{\sqrt{-1}} \times E_{\sqrt{-1}})$ and $V_3=F_4^2$ respectively. The lattice $S_1$ has a $\Z$-basis $e_1,e_2,\dots,e_{18},\alpha,\beta$ such that $U\oplus E_8^{\oplus 2}=\langle e_1,e_2,\dots,e_{18}\rangle$ and $\alpha^2=\beta^2=-2$, $(\alpha,\beta)=0$, $(\alpha,e_i)=(\beta,e_i)=0$ for all $i$. Then
$$S_3=\langle e_1,e_2,\dots,e_{18},2 \alpha,2\beta\rangle \subset S_2=\langle e_1,e_2,\dots,e_{18},\alpha+\beta,\alpha-\beta\rangle\subset S_1.$$

\begin{lemma}\label{lem:f}
Let $f\in {\rm O}'(S_1)$. We write
$$f(\alpha)=\sum_{i=1}^{18} n_i e_i+a \alpha + b \beta.$$
Then
\begin{enumerate}
\item all $n_i$ are even;
\item exactly one of $a$ and $b$ is even.
\end{enumerate}
\end{lemma}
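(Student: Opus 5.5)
Both claims come from the action of $f$ on the discriminant group $A_{S_1}$, together with the fact that $f$ preserves the quadratic form. Since $U\oplus E_8^{\oplus 2}=\langle e_1,\dots,e_{18}\rangle$ is unimodular, we have
$$S_1^*=\langle e_1,\dots,e_{18},\alpha/2,\beta/2\rangle \quad\text{and}\quad A_{S_1}=(\Z/2)[\alpha/2]\oplus(\Z/2)[\beta/2].$$
The values of the form are $q(\alpha/2)=q(\beta/2)=-1/2$ and $q((\alpha+\beta)/2)=-1\equiv 1 \pmod{2\Z}$. Every $f\in {\rm O}'(S_1)\subset {\rm O}(S_1)$ extends to $S_1^*$, so $f(\alpha/2)\in S_1^*$. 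This means $f(\alpha)\in 2S_1^*=\langle 2e_1,\dots,2e_{18},\alpha,\beta\rangle$, so every $n_i$ is even. That proves (1).

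For (2), the class of $f(\alpha)/2$ in $A_{S_1}$ is $\bar f([\alpha/2])$. This is a nonzero element because $\bar f$ is an automorphism. It must also have the same value of $q$ as $[\alpha/2]$, namely $-1/2\equiv 3/2 \pmod{2\Z}$. The nonzero elements of $A_{S_1}$ are $[\alpha/2]$, $[\beta/2]$ and $[(\alpha+\beta)/2]$, with $q$-values $3/2$, $3/2$ and $1$ respectively. So $\bar f$ sends $[\alpha/2]$ to $[\alpha/2]$ or $[\beta/2]$. By (1), $f(\alpha)/2 \equiv (a/2)\alpha+(b/2)\beta$ modulo $S_1$, so its class is $a[\alpha/2]+b[\beta/2]$. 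This class equals $[\alpha/2]$ or $[\beta/2]$ exactly when one of $a,b$ is odd and the other is even, which gives (2).

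As a cross-check on parity, one can compute directly: $f(\alpha)^2=-2$ gives $-2=(\sum n_ie_i)^2-2a^2-2b^2$. The term $(\sum n_ie_i)^2$ lies in $8\Z$, since the $n_i$ are even and the lattice is even, so $a^2+b^2\equiv 1\pmod 4$. This forces exactly one of $a,b$ to be odd, and gives an independent proof of (2) once (1) is known.

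The only step that needs care is (1): the identification of $S_1^*$ and the fact that isometries of $S_1$ preserve $S_1^*$. Both are standard. The hypothesis $f\in{\rm O}'(S_1)$ rather than ${\rm O}(S_1)$ is not actually needed.
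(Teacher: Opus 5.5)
Your proof is correct and is essentially the paper's argument. For (1), the paper shows $(f(\alpha),x)\in 2\Z$ for all $x\in S_1$ and pairs against dual vectors $y_i$ in the unimodular part, which is exactly your statement $f(\alpha)\in 2S_1^*$; for (2), the paper uses primitivity of $f(\alpha)$ together with $f(\alpha)^2=-2$, which is your cross-check, and your discriminant-form version is the same norm computation in different language, since $q_{A_{S_1}}([f(\alpha)/2])\equiv f(\alpha)^2/4 \pmod{2\Z}$.
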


\begin{proof}
Since $(\alpha, x)$ is even for any $x\in S_1$, we have $(f(\alpha),x)$ is even. Since $U\oplus E_8^{\oplus 2}$ is unimodular, for any $1\le i\le 18$, there exists $y_i\in \langle e_1,\dots,e_{18}\rangle$ such that $(y_i,e_i)=1$ and $(y_i,e_j)=0$ ($i\neq j$). Then $n_i=(f(\alpha),y_i)$ is even and (1) holds. By $f(\alpha)$ being primitive in $S_1$ and $-2=\alpha^2=f(\alpha)^2$, we infer that exactly one of $a$ and $b$ is even and (2) holds.
\end{proof}

Let $f\in {\rm O}'(S_2)\subset {\rm O}'(S_1)$. By Lemma \ref{lem:f}, we may write
$f(\alpha)=\sum_{i=1}^{18} 2n_i e_i+a \alpha + b \beta$. We call $$v_F(f):=(\overline{n_1},\overline{n_2},\dots,\overline{n_{18}})\in \mathbb{F}_2^{18}$$ the {\it associated vector} of $f$.

\begin{lemma}\label{lem:vF}
Let $f,g\in {\rm O}'(S_2)\subset {\rm O}'(S_1)$. Then the following statements are equivalent:
\begin{enumerate}
\item $f(S_3)=g(S_3)$;
\item $f(\frac{\alpha}{4})\equiv g(\frac{\alpha}{4})$ mod $S_2^*$;
\item $v_F(f)=v_F(g)$.
\end{enumerate}
\end{lemma}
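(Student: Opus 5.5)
The plan is to prove (1)$\Leftrightarrow$(2) by dualizing inside $S_1\otimes_\Z\Q$, and (2)$\Leftrightarrow$(3) by reading off coefficients. Everything rests on an explicit description of the duals. We have $S_2^*=\langle e_1,\dots,e_{18},\frac{\alpha+\beta}{4},\frac{\alpha-\beta}{4}\rangle$, because $\langle\alpha+\beta,\alpha-\beta\rangle$ has Gram matrix ${\rm diag}(-4,-4)$ and $\langle e_i\rangle$ is unimodular. Similarly $S_3^*=\langle e_1,\dots,e_{18},\frac{\alpha}{4},\frac{\beta}{4}\rangle$. In particular
$$S_2^*\cap\Big(\Q\alpha\oplus\Q\beta\Big)=\Big\{\tfrac{x\alpha+y\beta}{4}\;\Big|\;x,y\in\Z,\ x\equiv y \bmod 2\Big\},$$
and an element $\sum c_ie_i+\frac{x\alpha+y\beta}{4}$ lies in $S_2^*$ if and only if all $c_i\in\Z$ and $x\equiv y\bmod 2$.

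For (1)$\Leftrightarrow$(2), note first that $\frac{\beta}{4}=\frac{\alpha}{4}-\frac{\alpha-\beta}{4}$, so $S_3^*=S_2^*+\Z\frac{\alpha}{4}$. A determinant count gives $[S_3^*:S_2^*]=2$: indeed $|A_{S_3}|=64$, $|A_{S_2}|=16$ and $[S_2:S_3]=2$, so the index is $64/(16\cdot 2)=2$. Since $f,g\in{\rm O}'(S_2)$ preserve $S_2$ and hence $S_2^*$, we get
$$f(S_3)^*=f(S_3^*)=S_2^*+\Z f\big(\tfrac{\alpha}{4}\big),$$
and likewise for $g$. Hence (2) implies $f(S_3)^*=g(S_3)^*$, and dualizing again gives $f(S_3)=g(S_3)$.

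Conversely, suppose $M:=f(S_3)=g(S_3)$. Then $M^*\supset S_2^*$ with $M^*/S_2^*\simeq\Z/2$. Both $f(\frac{\alpha}{4})$ and $g(\frac{\alpha}{4})$ lie in $M^*$ but not in $S_2^*$, since $\frac{\alpha}{4}\notin S_2^*$ and $f,g$ preserve $S_2^*$. Therefore both represent the unique nonzero class of $M^*/S_2^*$, which is (2).

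For (2)$\Leftrightarrow$(3), use Lemma \ref{lem:f} to write
$$f\big(\tfrac{\alpha}{4}\big)=\sum \tfrac{n_i}{2}e_i+\tfrac{a\alpha+b\beta}{4},\qquad g\big(\tfrac{\alpha}{4}\big)=\sum \tfrac{n_i'}{2}e_i+\tfrac{a'\alpha+b'\beta}{4}.$$
By the description of $S_2^*$ above, their difference lies in $S_2^*$ if and only if $n_i\equiv n_i'\bmod 2$ for all $i$ and $a-a'\equiv b-b'\bmod 2$. The second condition is automatic: by Lemma \ref{lem:f}(2), both $a-b$ and $a'-b'$ are odd. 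So (2) holds exactly when $v_F(f)=v_F(g)$.

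The only delicate points are the index computation $[S_3^*:S_2^*]=2$ and the verification that $\frac{\alpha}{4}\notin S_2^*$; both are immediate from the explicit bases.
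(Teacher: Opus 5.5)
Your proof is correct and follows the paper's route. You get (1)$\Leftrightarrow$(2) from $S_3^*=S_2^*+\Z\frac{\alpha}{4}$ and the invariance of $S_2^*$ under $f,g$, and (2)$\Leftrightarrow$(3) by comparing coefficients against the explicit basis of $S_2^*$ together with Lemma \ref{lem:f}. Your computation $[S_3^*:S_2^*]=2$ also makes explicit a point the paper leaves implicit in the direction (1)$\Rightarrow$(2): $f(\frac{\alpha}{4})$ and $g(\frac{\alpha}{4})$, both generating $f(S_3)^*$ over $S_2^*$, must actually be congruent modulo $S_2^*$.
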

\begin{proof}
We write
$f(\alpha)=\sum_{i=1}^{18} 2n_i e_i+a \alpha + b \beta$
and
$g(\alpha)=\sum_{i=1}^{18} 2n_i' e_i+a' \alpha + b' \beta.$
Note that $f(S_3)=g(S_3)$ if and only if $f(S_3^*)=g(S_3^*)$. Since $S_3^*=\langle \frac{\alpha}{4},S_2^*\rangle$ and $f(S_2^*)=g(S_2^*)=S_2^*$, we have $f(S_3)=g(S_3)$ if and only if $f(\frac{\alpha}{4})\equiv g(\frac{\alpha}{4})$ mod $S_2^*$, which implies the equivalence between (1) and (2).

By $f(\frac{\alpha}{4})- g(\frac{\alpha}{4})=\sum_{i=1}^{18} \frac{n_i-n_i'}{2} e_i+\frac{a-a'}{4} \alpha + \frac{b-b'}{4} \beta$, $S_2^*=\langle e_1,\dots,e_{18}, \frac{\alpha+\beta}{4},\frac{\alpha}{2}\rangle$ and Lemma \ref{lem:f}, we infer the equivalence between $(2)$ and $(3)$.
\end{proof}

\begin{corollary}\label{cor:index}
The index $[{\rm O}'(S_2):{\rm O}'(S_3)]\leq 2^{18}$.
\end{corollary}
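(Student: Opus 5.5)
We bound the index by an orbit--stabilizer argument, as in the proof of Proposition \ref{prop14}, with Lemma \ref{lem:vF} doing the counting.

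First note that ${\rm O}'(S_3)\subset {\rm O}'(S_2)$ by Lemma \ref{lem:isogeny}(3), applied with $m=2<n=3$ under the embedding $S_3\subset S_2$ fixed above. We let ${\rm O}'(S_2)$ act on the set of sublattices $\{f(S_3)\mid f\in {\rm O}'(S_2)\}$ of $S_2$. The stabilizer of $S_3$ is
$$\{f\in {\rm O}'(S_2)\mid f(S_3)=S_3\}.$$
Any such $f$ restricts to an isometry of $S_3$ preserving $P_3=P_2$, so it lies in ${\rm O}'(S_3)$. Conversely, ${\rm O}'(S_3)$ is contained in this stabilizer. Hence the stabilizer equals ${\rm O}'(S_3)$, and
$$[{\rm O}'(S_2):{\rm O}'(S_3)]=|{\rm O}'(S_2)\cdot\{S_3\}|.$$

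Next we count the orbit. By Lemma \ref{lem:vF}, for $f,g\in{\rm O}'(S_2)$ we have $f(S_3)=g(S_3)$ if and only if $v_F(f)=v_F(g)$. So the map $f(S_3)\mapsto v_F(f)$ is a well-defined injection from the orbit ${\rm O}'(S_2)\cdot\{S_3\}$ into $\F_2^{18}$. Here $v_F$ is defined via Lemma \ref{lem:f}, which applies because ${\rm O}'(S_2)\subset{\rm O}'(S_1)$. It follows that the orbit has at most $2^{18}$ elements, and the corollary follows.

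There is no real obstacle, since Lemma \ref{lem:vF} is exactly the needed injectivity. The only point to check is the identification of the stabilizer with ${\rm O}'(S_3)$. This holds because both groups act on the same real space $S_3\otimes\R=S_2\otimes\R$ with the same positive cone, so preserving $P$ means the same thing in both.
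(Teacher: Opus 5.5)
Your proof is correct and is essentially the paper's argument. The paper defines $g\,{\rm O}'(S_3)\mapsto v_F(g)$ on left cosets and cites Lemma \ref{lem:vF} for well-definedness and injectivity; that is your orbit--stabilizer count with the orbit of $S_3$ replaced by the coset space, and your explicit identification of the stabilizer of $S_3$ in ${\rm O}'(S_2)$ with ${\rm O}'(S_3)$ is exactly the step the paper leaves implicit.
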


\begin{proof}
We define $\varphi: {\rm O}'(S_2)/{\rm O}'(S_3)\rightarrow \mathbb{F}_2^{18}$, $g {\rm O}'(S_3)\mapsto v_F(g)$. By Lemma \ref{lem:vF}, $\varphi$ is a well-defined injective map. Thus, $|{\rm O}'(S_2)/{\rm O}'(S_3)|\leq 2^{18}$. \end{proof}

We are now ready to prove Theorem \ref{main3}.

\medskip

{\it Proof of Theorem \ref{main3}.}
Recall that  $V_2={\rm Km}\, (E_{\sqrt{-1}} \times E_{\sqrt{-1}})$ and $S_{2}={\rm NS}(V_2)$ is an even overlattice of $S_{3}={\rm NS}(F_4^2)$ with ${\rm O}'(S_{3})\subset {\rm O}'(S_{2})$ (Lemma \ref{lem:isogeny}). Let $G:={\rm O}'(S_2)$, $\tilde{H}=H:={\rm O}'(S_3)$, and let $P$ be the positive cone $P_{V_2}\subset S_{2}\otimes \R$ of $V_2$.

By \cite[Lemmas 3.2, 3.5, 3.6]{KK01}, $P$ has a tessellation $\mathcal{T}_{G}=\{D_i\}_{i\in I}$ preserved by ${\rm O}'(S_2)$, each fundamental domain $D_i$ is bounded by exactly $k=624=40+40+64+160+320$ hyperplanes, and the symmetric group $G_i$ of $D_i$ is of order $7680$.
As before, we let  $K=G_{i_0}$ for some fixed $i_0\in I$. By computation with help of PC, we find a subset of $G/H$ contains $2^{18}$ different left cosets of $H$ in  $G$. Then by Corollary \ref{cor:index},  $G/H$ has exactly $2^{18}$ left cosets. Note that $|H\backslash G/K|=|K\backslash G/H|$. Consider the action $$K\times G/H\rightarrow G/H, \,\,(k, gH)\mapsto kgH.$$ Then the number of all orbits of this action is equal to $|K\backslash G/H|=|J|$ for any $H$-fundamental subset $J$ of $I$ by Proposition \ref{prop:J}. By computation with help of PC,  the action of $K$ on $G/H$ has exactly $112$ orbits and we find an explicit $H$-fundamental subset $J=\{j_1,...,j_{112}\}$ of $I$. Moreover, the finite groups $H_{j_t}=G_{j_t}\cap H$ ($1\le t\le m=112$) and the number $a_t$ of orbits of the actions of $H_{j_t}$ on codimension one faces of $D_{j_t}$ are summarized in Table \ref{tab:orbits}. Moreover, for our choice of $J$, the set $\mathcal{N}_J$ has cardinality $232$ by computation with help of PC. Thus, by Theorem \ref{thm:sharperbound} and Table \ref{tab:orbits}, we get
$$l(H)\le 24246+0-232=24014.$$
By removing repeated ones among these $24014$ isometries using computer, we obtain an explicit set $\mathcal{S}$ of generators of ${\rm O}'(S_3)$ with the cardinality $|\mathcal{S}|=19361$. An explicit description of $D_{j_t}$, the generators of $H_{j_t}$, the $H_{j_t}$-orbits, $\mathcal{N}_J$, and $\mathcal{S}$ can be found in the ancillary file fundamentaldomains.txt.

Let $\tilde{L}$ and $\tilde{W}$ be ${\rm Aut}(F_4^2)$ and the Weyl group $W(S_F)$ of $S_F$ respectively. Consider the natural homomorphism
$$p: {\rm Aut}(F_4^2)=\tilde{L}\rightarrow \tilde{H}/\tilde{W}={\rm O}'(S_F)/W(S_F).$$
Then $|{\rm ker}(p)|=1$ and $s=[\tilde{H}/\tilde{W}:{\rm Im}(p)]=4$ by Torelli theorem of K3 surfaces (\cite{PS71}) and by the fact that the action of ${\rm Aut}(F_4^2)$ on the holomorphic $2$-forms being of order $4$ (see the proof of Theorem \ref{main2}). Thus, by Corollary \ref{cor:boundgeneral}, we have
$$l({\rm Aut}(F_4^2))=l(\tilde{L})\le 0+4(19361-1)+1=77441.$$
This completes the proof of Theorem \ref{main3}. \qed

\begin{table}[htbp]
\centering
\caption{The number $a_t$ of orbits of $H_{j_t}$ acting on the codimension $1$ faces of $D_{j_t}$}\label{tab:orbits}
{\footnotesize
\begin{tabular}{{|ccc|ccc|ccc|ccc|ccc|}}
\hline
$H_{j_t}$ & $l(H_{j_t})$ & $a_t$ &$H_{j_t}$ & $l(H_{j_t})$ & $a_t$  & $H_{j_t}$ & $l(H_{j_t})$ & $a_t$ & $H_{j_t}$ & $l(H_{j_t})$ & $a_t$ & $H_{j_t}$ & $l(H_{j_t})$ & $a_t$  \\
\hline
$1$& 0 & 624 & $C_2$ & 1 & 342 & $C_2^2$ & 2 & 189 & $C_2^2$ & 2 & 189 & $D_{12}$ & 2 &75  \\
$1$ & 0 & 624 & $C_2$ & 1 & 342 & $C_2^2$& 2 & 198 & $C_2^2$ & 2 & 189 & $D_{12}$ & 2 & 75 \\
$1$ & 0 & 624 & $C_2$ & 1 & 342 & $C_2^2$ & 2 & 189 & $C_2^2$ & 2 & 189 & $C_2\times D_{8}$ & 3 & 62 \\
$1$ & 0 & 624 & $C_2$ & 1 & 318 & $C_2^2$ & 2 & 175 & $C_2^3$ & 3 & 103 & $C_2\times D_{8}$ & 3 & 71 \\
$1$& 0 & 624 & $C_2$ & 1 & 342 & $C_2^2$ & 2 & 175 & $C_2^3$ & 3 & 117 & $C_2\times D_{8}$ & 3 & 62 \\
$1$& 0 & 624 & $C_2$ & 1 & 342 &$C_2^2$ & 2 & 175 & $C_2^3$ & 3 & 103 & $C_2\times D_{8}$ & 3 & 71 \\
$1$& 0 & 624 & $C_2$& 1 & 314 & $C_2^2$ & 2 & 175 & $C_2^3$ & 3 & 103 & $C_2\times D_{8}$ & 3 & 71 \\
$1$& 0 & 624 & $C_3$ & 1 & 222 & $C_2^2$ & 2 & 175 & $C_2^3$ & 3 & 104 & $C_2\times D_{8}$ & 3 & 62 \\
$1$& 0 & 624 & $C_3$ & 1 & 222 & $C_2^2$ & 2 & 198 & $C_2^3$ & 3 & 120 & $C_2\times D_{8}$ & 3 & 62 \\
$1$& 0 & 624 & $C_2^2$ & 2 & 175 & $C_2^2$ & 2 & 175 & $C_2^3$ & 3 & 117 & $C_2\times D_{8}$ & 3 & 62 \\
$C_2$ & 1 & 342 & $C_2^2$ & 2 & 163 & $C_2^2$ & 2 & 175 & $C_2^3$ & 3 & 120 & $C_2\times D_{8}$ & 3 & 62 \\
$C_2$ & 1 & 342 & $C_2^2$ & 2 & 198 & $C_2^2$ & 2 & 189 & $C_2^3$ & 3 & 104 & $C_2\times D_{8}$ & 3 & 71 \\
$C_2$ & 1 & 314 & $C_2^2$ & 2 & 189 & $C_2^2$ & 2 & 189 & $C_2^3$ & 3 & 103 & $C_2\times D_{8}$ & 3 & 71 \\
$C_2$ & 1 & 318 & $C_2^2$ & 2 & 175 & $C_2^2$ & 2 & 175 & $C_2^3$ & 3 & 104 & $C_2\times D_{8}$ & 3 & 71 \\
$C_2$ & 1 & 342 & $C_2^2$ & 2 & 198 & $C_2^2$ & 2 & 189 & $C_2^3$& 3 & 120 & $C_2^2\times D_{10}$ & 3 & 30 \\
$C_2$ & 1 & 342 & $C_2^2$ & 2 & 198 & $C_2^2$ & 2 & 189 & $C_2^3$ & 3 & 104 & $C_2^2\times D_{10}$ & 3 & 30 \\
$C_2$ & 1 & 342 & $C_2^2$ & 2 & 189 & $C_2^2$ & 2 & 163 & $C_2^3$ & 3 & 120 & $C_2\times \mathfrak{S}_4$ & 2 & 26 \\
$C_2$ & 1 & 342 & $C_2^2$ & 2 & 198 & $C_2^2$ & 2 & 177 & $C_2^3$ & 3 & 104 & $C_2\times \mathfrak{S}_4$ & 2 & 35 \\
$C_2$ & 1 & 342 & $C_2^2$ & 2 & 175 & $C_2^2$ & 2 & 177 & $C_2^3$ & 3 & 104 & $C_2\times \mathfrak{S}_4$ & 2 & 26 \\
$C_2$ & 1 & 318 & $C_2^2$ & 2 & 198 & $C_2^2$ & 2 & 198 & $C_2^3$ & 3 & 117 & $C_2\times \mathfrak{S}_4$ & 2 & 35 \\
$C_2$ & 1 & 342 &$C_2^2$ & 2 & 175 & $C_2^2$ & 2 & 198 & $C_2^3$ & 3 & 117 &  &  &  \\
$C_2$ & 1 & 342 & $C_2^2$ & 2 & 198 & $C_2^2$ & 2 & 198 & $D_{12}$ & 2 & 61 &  &  &  \\
$C_2$ & 1 & 318 & $C_2^2$ & 2 & 189 & $C_2^2$ & 2 & 198 & $D_{12}$ & 2 & 61 &  &  &  \\
\hline
\end{tabular}
In this table, $C_n$ is the cyclic group of order $n$ and $D_n$ is the dihedral group of order $n$.}

\end{table}

\section{Dirichlet reduction and Proof of Theorem \ref{main1}}\label{sect5}
In order to get a generating set of ${\rm Aut}(F_4^2)$ with small cardinality, we introduce a new method of reducing the number of generators considering the Dirichlet domain. Let $V$ be a K3 surface with Picard number $\rho(V)\ge 3$. Let $S$ denote the N\'eron-Severi lattice of $V$ and $P\subset S\otimes \R$ the positive cone containing an ample class $h$ of $V$. We call 
$$D_h:=\{x\in P|\, (x, h)\le (x, g(h))\text{ for all } g\in {\rm O}'(S)\}$$
{\it the Dirichlet domain} with respect to $h$. We do not require the stabilizer of $h$ in ${\rm O}'(S)$ is trivial.
\begin{proposition}\label{pp:Dirichlet}
The Dirichlet domain $D_h$ is a finite rational polyhedral cone. In particular, there exist a positive integer $k$ and $g_1,g_2,...,g_k\in {\rm O}'(S)$ such that 
$$\Delta_i:=\{x\in P|\, (x,g_i(h)-h)=0\},\,\, 1\le i\le k,$$ 
are $k$ different hyperplanes bounding $D_h$ and
$$D_h=\{x\in P|\, (x, h)\le (x, g_i(h))\text{ for }1\le i\le k \}.$$ Moreover, for any $f\in {\rm O}'(S)$, 
$$\text{either } f(h)=h \text{ or } (f(h), g_i(h)-h)<0 \text{ for some }i.$$ In particular, ${\rm O}'(L)$ is generated by $g_1,g_2,...,g_k$ and $\{g\in {\rm O}'(S)|\, g(h)=h\}.$
\end{proposition}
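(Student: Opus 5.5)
We use the classical theory of Dirichlet (Voronoi) domains for arithmetic groups acting on hyperbolic space, together with the geometric finiteness of ${\rm O}'(S)$. The structure of the argument follows the proof of Theorem \ref{thm:bound}.

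\textbf{Step 1: finiteness of the polyhedron.} Since $h$ is ample and $S$ is hyperbolic, $(h,h)>0$. The orbit ${\rm O}'(S)\cdot h$ is a discrete subset of the hyperboloid $\{x^2=h^2\}\cap P$, because ${\rm O}'(S)$ is an arithmetic subgroup and hence acts properly discontinuously on $P/\R_{>0}$. Each condition $(x,h)\le(x,g(h))$ is a closed half-space bounded by the rational hyperplane $(g(h)-h)^\perp$. In the projective (hyperbolic) model this hyperplane is the perpendicular bisector of $h$ and $g(h)$, so $D_h$ is exactly the Dirichlet domain of the discrete group ${\rm O}'(S)$ centered at $[h]$.

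${\rm O}'(S)$ is an arithmetic lattice in ${\rm O}(1,\rho-1)$. By Siegel/Borel--Harish-Chandra its Dirichlet domains have finitely many sides; concretely, arithmetic groups are geometrically finite, so every Dirichlet domain is a finite-sided polyhedron with finite volume. I would cite this standard fact and not reprove it. An alternative route is Sterk's argument \cite{St85}: a fundamental domain is the union of finitely many translates of a Weyl chamber intersected with the nef cone, which is rational polyhedral near rational cusps. The resulting finitely many walls are defined by $g_i(h)-h\in S$, so $D_h$ is rational. Discarding redundant inequalities, we keep $g_1,\dots,g_k$ whose hyperplanes $\Delta_i$ are distinct faces of $D_h$, and then $D_h$ is cut out by these $k$ inequalities. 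I expect this finiteness step to be the main obstacle, since $D_h$ can have cusps at isotropic rational rays. The first thing I would try is to invoke the theorem that for geometrically finite groups every Dirichlet polyhedron is finite-sided; Siegel's theorem on fundamental sets covers the arithmetic case.

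\textbf{Step 2: the dichotomy.} Let $f\in{\rm O}'(S)$ with $f(h)\ne h$. Suppose $(f(h),g_i(h)-h)\ge0$ for all $i$. Then $f(h)\in D_h$ by Step 1, so $(f(h),h)\le(f(h),g(h))$ for all $g$. Take $g=f^{-1}$; this gives $(f(h),h)\le(f(h),f^{-1}(h))=(f^2(h),h)$. More directly, $f(h)\in D_h$ means $f(h)$ is at least as close to $h$ as to any orbit point, in particular to $f(h)$ itself. Hyperbolic distance satisfies $\cosh d(x,y)=(x,y)/h^2$ on the hyperboloid. So $d(f(h),h)\le d(f(h),f(h))=0$, which forces $f(h)=h$, a contradiction. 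Hence some $i$ has $(f(h),g_i(h)-h)<0$.

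\textbf{Step 3: generation.} Let $\Gamma$ be the subgroup generated by $g_1,\dots,g_k$ and ${\rm Stab}(h)$. We use induction on the value $(f(h),h)$, which takes values in a discrete set bounded below, since $(f(h),h)\in\Z$ and $(f(h),h)\ge h^2$ by the reverse Cauchy--Schwarz inequality.

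If $f(h)=h$, then $f\in{\rm Stab}(h)$. Otherwise pick $i$ as in Step 2. Then $(g_i^{-1}f(h),h)=(f(h),g_i(h))<(f(h),h)$, so the value strictly decreases. Finitely many steps, which terminate by integrality, reach the stabilizer, and therefore $f\in\Gamma$.
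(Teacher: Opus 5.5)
Your proposal is correct and follows essentially the same route as the paper. Finiteness of $D_h$ is taken from the literature (the paper cites Sterk's Lemma 2.3). The dichotomy comes from testing $f(h)\in D_h$ against $g=f$ together with the reverse Cauchy--Schwarz (Hodge index) inequality $(f(h),h)>h^2$ for $f(h)\neq h$; your hyperbolic-distance formulation is the same fact. Generation then follows by descent on the integer $(f(h),h)$.

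The one weak spot is the justification you offer in Step~1. The implication \emph{geometrically finite implies every Dirichlet domain is finite-sided} is not reliable in general in dimension $\ge 4$. Rest the citation instead on the finite covolume/arithmeticity of ${\rm O}'(S)$, or directly on Sterk's lemma as the paper does.
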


\begin{proof}
The existence of a finite set of isometries $g_i$ is a consequence of the proof of the cone conjecture for K3 surface by Sterk (\cite[Page 511]{St85}). Indeed, $D_h$ is a finite rational polyhedral cone by \cite[Lemma 2.3]{St85}. Let $\Delta_i$ ($1\le i\le k$) be the bounding hyperplanes of $D_h$. Then there exist  $g_i\in {\rm O}'(S)$ ($1\le i\le k$) such that $g_i$ define the hyperplanes $\Delta_i$ which bound $D_h$.  

For an isometry $f\in {\rm O}'(S)$, if $f(h)\neq h$, then $$(f(h),f(h)-h)=(f(h),f(h))-(f(h),h)=(h,h)-(f(h),h)<0$$ by Hodge index theorem. Indeed, 
$$(f(h), h)^2> (h,h)(f(h),f(h))=(h,h)^2 $$
as $f(h)$ and $h$ are linearly independent by $f(h)\neq h$. Thus, if $f(h)\neq h$, then $f(h)\notin D_h$ by the definition of $D_h$ and $(f(h),g_i(h)-h)<0$ for some $i$. 

We claim that $g_1,g_2,...,g_k$ and the finite group $\{g\in {\rm O}'(S)|\, g(h)=h\}$ generate ${\rm O}'(S)$. If $f(h)\neq h$, then  $(f(h), g_i(h)-h)<0$, or equivalently, $(g_{i}^{-1}f(h),h)<(f(h),h)$ for some $i$. If $g_{i}^{-1}f(h)\neq h$, then $(g_{i}^{-1}f(h),g_j(h)-h)<0$ for some $j$. Hence $$(g_j^{-1}g_i^{-1}f(h),h)<(g_i^{-1}f(h),h)<(f(h),h).$$
Repeating this process, we get $1\le i_1,...,i_l\le k$ such that $g_{i_l}^{-1}\cdots g_{i_1}^{-1}f(h)=h$, which implies the claim. \end{proof}

In general it is difficult to compute such $g_i$ and the bounding hyperplanes $\Delta_i$. Proposition \ref{pp:Dirichlet} and its proof motivate the following new method of reducing the number of generators.

\medskip

\noindent
{\bf Dirichlet reduction.} Let $\mathcal{S}=(f_1,f_2,...,f_m)$ be a finite sequence of elements in ${\rm O}'(S)$. We call $$d({\mathcal{S},h}):=\Sigma_{i=1}^{m} (f_i(h),h)$$ {\it the total $h$-degree} of $\mathcal{S}$. We use a word $w:=(a_1,...,a_l)$ ($a_i\in \{\pm 1, \pm 2,...,\pm m\}$) of length $l$ to represent an isometry $f_w:=f_{a_1}f_{a_2}\cdots f_{a_l}$. Here we adopt the convention that $f_{-i}:=f^{-1}_i$ for $1\le i \le m$. If $i$ appears exactly once in the $l$-tuple $(|a_1|,|a_2|,...,|a_l|)$ and $$(f_w(h),h)<(f_i(h),h),$$ then we say $w$ {\it replaces $i$}. If $w$ replaces $i$, then $$\mathcal{S}_{(w,i)}:=(f_1,f_2,...,f_{i-1},f_w,f_{i+1},...,f_m)$$ and $\mathcal{S}$ generate the same subgroup of ${\rm O}'(S)$ while $$d(\mathcal{S}_{(w,i)},h)<d(\mathcal{S},h).$$ We search words $w$ of reasonable length and if we find a word $w$ replacing some $i$, then we replace $\mathcal{S}$ by $\mathcal{S}_{(w,i)}$ (if $f_w$ is equal to $f_j$ for some $j\neq i$, then we replace $\mathcal{S}$ by $\mathcal{S}\setminus \{f_i\}$). We repeat this process until either $\mathcal{S}$ becomes reasonably small or it is difficult to find a new word repacing some $i$ within reasonable time.

\begin{remark}\label{rmk:Dirichlet} (1) For an integer $b>0$, $G_b:=\{f\in {\rm O}'(S)|\, (f(h),h)\le b\}$ is a finite set. In fact, the action of ${\rm O}'(S)$ on $P$ is discrete with finite stabilizer of $h$, which implies the finiteness of $G_b$. Its cardinality $|G_b|$ decreases when $b$ decreases. If $b=h^2$, $G_b$ is the stabilizer of $h$ in ${\rm O}'(S)$. This is the reason why Dirichlet reduction is useful to reduce the number of generators. It turns out that this method works quite effectively in our study of ${\rm Aut}(F_4^2)$ as we will explain below.

(2) Suppose $f$ is in the group generated by $\mathcal{S}=(f_1,f_2,...,f_m)$. The Dirichlet reduction may be used to find an explicit word for $f$. More precisely, we replace $\mathcal{S}$ by $(f_1,f_2,...,f_m, f)$ and proceed Dirichlet reduction by only searching for words repacing $m+1$. 
\end{remark}

Let $\xi=e^{\frac{2\pi i}{8}}$. The Fermat quartic K3 surface $F_4^2$ contains the following $48$ lines: $$L_{ij}:=\{x_0-\xi^i x_1=x_2-\xi^j x_3=0\},$$ $$M_{ij}:=\{x_0-\xi^i x_2=x_1-\xi^j x_3=0\},$$ $$N_{ij}:=\{x_0-\xi^i x_3=x_1-\xi^j x_2=0\},$$ where $i,j\in \{1,3,5,7\}$. Let $h\in {\rm NS}(F_4^2)$ denote the hyperplane class. Note that $R_1:=h-L_{77}-N_{13}$, $R_2:=h-M_{57}-N_{13}$, $R_3:=h-M_{33}-N_{33}$, $R_4:=h-L_{11}-M_{33}$ are the classes of four smooth conics on $F_4^2$.  Consider elliptic fibrations and automorphisms of $F_4^2$ in Tables \ref{tab:fibrations} and \ref{tab:16generators}.

\begin{center}
\renewcommand{\arraystretch}{1.0} 
\setlength{\tabcolsep}{1pt}
\begin{longtable}{|c|>{\small\centering\arraybackslash}m{3.5cm}|>{\small\centering\arraybackslash}m{2.5cm}||c|>{\small\centering\arraybackslash}m{3.8cm}|>{\small\centering\arraybackslash}m{2.5cm}|}
\caption{Elliptic fibrations on the Fermat quartic \(F_4^2\)} \label{tab:fibrations} \\
\hline
{\small No. $i$} & Dual graph of a reducible fiber \(Z_i\) & Section & {\small No. $i$} & Dual graph of a reducible fiber \(Z_i\) & Section\\
\hline
\endfirsthead
\hline
{\small No. $i$} & Dual graph of a reducible fiber \(Z_i\) & Section & {\small No. $i$} & Dual graph of a reducible fiber \(Z_i\) & Section\\
\hline
\endhead

3 & 
\begin{tikzpicture}[scale=0.45, every node/.style={circle,draw,line width=0.5pt,inner sep=1pt,minimum size=3.5pt}]
\node[label={[font=\tiny, inner sep=1pt]above left:\(N_{77}\)}] (a) at (0,0.7) {};
\node[label={[font=\tiny, inner sep=1pt]above right:\(M_{33}\)}] (b) at (0.7,0) {};
\node[label={[font=\tiny, inner sep=1pt]below right:\(L_{33}\)}] (c) at (0,-0.7) {};
\node[label={[font=\tiny, inner sep=1pt]below left:\(N_{71}\)}] (d) at (-0.7,0) {};
\draw (a)--(b)--(c)--(d)--(a);
\end{tikzpicture}
 & \(C_3=L_{71}\) \newline \((C_3.N_{77}=1)\) \newline \(C'_3=N_{37}\) \newline \((C_3'.N_{77}=1)\)& 
10 & 
\begin{tikzpicture}[scale=0.45, every node/.style={circle,draw,line width=0.5pt,inner sep=1pt,minimum size=3.5pt}]
\node[label={[font=\tiny, inner sep=1pt]below:\(N_{11}\)}] (N11) at (0,0) {};
\node[label={[font=\tiny, inner sep=1pt]above right:\(N_{17}\)}] (N17) at (1.2,0.8) {};
\node[label={[font=\tiny, inner sep=1pt]below right:\(N_{71}\)}] (N71) at (1.2,-0.8) {};
\node[label={[font=\tiny, inner sep=1pt]above:\(L_{53}\)}] (L53) at (-1.0,0) {};
\node[label={[font=\tiny, inner sep=1pt]above:\(M_{75}\)}] (M75) at (-2.2,0) {};
\node[label={[font=\tiny, inner sep=1pt]above right:\(L_{75}\)}] (L75) at (-3.6,1.0) {};
\node[label={[font=\tiny, inner sep=1pt]below right:\(M_{77}\)}] (M77) at (-3.6,-0.8) {};
\draw (N11)--(N17); \draw (N11)--(N71); \draw (N11)--(L53);
\draw (L53)--(M75);
\draw (M75)--(L75); \draw (M75)--(M77);
\end{tikzpicture}
 & \(C_{10}=L_{37}\) \newline \((C_{10}.N_{17}=1)\)\\
\hline

4 & 
\begin{tikzpicture}[scale=0.45, every node/.style={circle,draw,line width=0.5pt,inner sep=1pt,minimum size=3.5pt}]
\node[label={[font=\tiny, inner sep=1pt]above left:\(N_{53}\)}] (a) at (0,0.7) {};
\node[label={[font=\tiny, inner sep=1pt]above right:\(M_{17}\)}] (b) at (0.7,0) {};
\node[label={[font=\tiny, inner sep=1pt]below right:\(M_{37}\)}] (c) at (0,-0.7) {};
\node[label={[font=\tiny, inner sep=1pt]below left:\(L_{37}\)}] (d) at (-0.7,0) {};
\draw (a)--(b)--(c)--(d)--(a);
\end{tikzpicture}
 & \(C_4=L_{11}\) \newline \((C_4.N_{53}=1)\) & 
11 & 
\begin{tikzpicture}[scale=0.45, every node/.style={circle,draw,line width=0.5pt,inner sep=1pt,minimum size=3.5pt}]
\node[label={[font=\tiny, inner sep=1pt]above:\(L_{75}\)}] (L75) at (0,0) {};
\node[label={[font=\tiny, inner sep=1pt]above right:\(L_{77}\)}] (L77) at (1.2,0.8) {};
\node[label={[font=\tiny, inner sep=1pt]below right:\(N_{15}\)}] (N15) at (1.2,-0.8) {};
\node[label={[font=\tiny, inner sep=1pt]above:\(L_{55}\)}] (L55) at (-1.2,0) {};
\node[label={[font=\tiny, inner sep=1pt]above:\(L_{51}\)}] (L51) at (-2.4,0) {};
\node[label={[font=\tiny, inner sep=1pt]above:\(M_{37}\)}] (M37) at (-3.6,0) {};
\node[label={[font=\tiny, inner sep=1pt]above right:\(M_{35}\)}] (M35) at (-4.8,1.0) {};
\node[label={[font=\tiny, inner sep=1pt]below right:\(M_{57}\)}] (M57) at (-4.8,-0.8) {};
\draw (L75)--(L77); \draw (L75)--(N15); \draw (L75)--(L55);
\draw (L55)--(L51); \draw (L51)--(M37);
\draw (M37)--(M35); \draw (M37)--(M57);
\end{tikzpicture}
 & \(C_{11}=L_{17}\) \newline \((C_{11}.L_{77}=1)\)\\
\hline

5 & 
\begin{tikzpicture}[scale=0.45, every node/.style={circle,draw,line width=0.5pt,inner sep=1pt,minimum size=3.5pt}]
\node[label={[font=\tiny, inner sep=1pt]above left:\(L_{13}\)}] (c) at (0,0) {};
\node[label={[font=\tiny, inner sep=1pt]above right:\(L_{73}\)}] (L73) at (0,0.9) {};
\node[label={[font=\tiny, inner sep=1pt]above left:\(M_{73}\)}] (M73) at (0,2.0) {};
\node[label={[font=\tiny, inner sep=1pt]right:\(L_{17}\)}] (L17) at (1.2*0.866, -0.6) {};
\node[label={[font=\tiny, inner sep=1pt]below right:\(M_{31}\)}] (M31) at (2.0*0.866, -1.0) {};
\node[label={[font=\tiny, inner sep=1pt]left:\(M_{57}\)}] (M57) at (-1.2*0.866, -0.55) {};
\node[label={[font=\tiny, inner sep=1pt]below left:\(L_{35}\)}] (L35) at (-2.0*0.866, -1.1) {};
\draw (c)--(L73); \draw (L73)--(M73);
\draw (c)--(L17); \draw (L17)--(M31);
\draw (c)--(M57); \draw (M57)--(L35);
\end{tikzpicture}
 & \(C_5=L_{51}\) \newline \((C_5.M_{73}=1)\)  & 
12 & 
\begin{tikzpicture}[scale=0.45, every node/.style={circle,draw,line width=0.5pt,inner sep=1pt,minimum size=3.5pt}]
\node[label={[font=\tiny, inner sep=1pt]above:\(N_{13}\)}] (A) at (0,1) {};
\node[label={[font=\tiny, inner sep=1pt]above right:\(M_{31}\)}] (B) at (0.866,0.5) {};
\node[label={[font=\tiny, inner sep=1pt]below right:\(M_{35}\)}] (C) at (0.866,-0.5) {};
\node[label={[font=\tiny, inner sep=1pt]below:\(N_{71}\)}] (D) at (0,-1) {};
\node[label={[font=\tiny, inner sep=1pt]below left:\(M_{53}\)}] (E) at (-0.866,-0.5) {};
\node[label={[font=\tiny, inner sep=1pt]above left:\(M_{13}\)}] (F) at (-0.866,0.5) {};
\draw (A)--(B)--(C)--(D)--(E)--(F)--(A);
\end{tikzpicture}
 & \(C_{12}=N_{15}\) \newline \((C_{12}.N_{13}=1)\)\\
\hline

6 & 
\begin{tikzpicture}[scale=0.45, every node/.style={circle,draw,line width=0.5pt,inner sep=1pt,minimum size=3.5pt}]
\node[label={[font=\tiny, inner sep=1pt]above left:\(M_{33}\)}] (A) at (0,1) {};
\node[label={[font=\tiny, inner sep=1pt]above right:\(L_{11}\)}] (B) at (0.866,0.5) {};
\node[label={[font=\tiny, inner sep=1pt]below right:\(L_{13}\)}] (C) at (0.866,-0.5) {};
\node[label={[font=\tiny, inner sep=1pt]below right:\(M_{71}\)}] (D) at (0,-1) {};
\node[label={[font=\tiny, inner sep=1pt]below left:\(N_{35}\)}] (E) at (-0.866,-0.5) {};
\node[label={[font=\tiny, inner sep=1pt]above left:\(L_{77}\)}] (F) at (-0.866,0.5) {};
\draw (A)--(B)--(C)--(D)--(E)--(F)--(A);
\end{tikzpicture}
 & \(C_6=N_{77}\) \newline \((C_6.M_{33}=1)\) \newline \(C'_6=M_{37}\) \newline \((C'_6.M_{33}=1)\)& 
13 & 
\begin{tikzpicture}[scale=0.45, every node/.style={circle,draw,line width=0.5pt,inner sep=1pt,minimum size=3.5pt}]
\node[label={[font=\tiny, inner sep=1pt]right:\(M_{51}\)}] (v0) at (1.5, 0) {};
\node[label={[font=\tiny, inner sep=1pt]above right:\(N_{15}\)}] (v1) at (1.149, 0.964) {};
\node[label={[font=\tiny, inner sep=1pt]above:\(N_{55}\)}] (v2) at (0.260, 1.477) {};
\node[label={[font=\tiny, inner sep=1pt]above left:\(L_{71}\)}] (v3) at (-0.75, 1.299) {};
\node[label={[font=\tiny, inner sep=1pt]left:\(M_{35}\)}] (v4) at (-1.409, 0.513) {};
\node[label={[font=\tiny, inner sep=1pt]below left:\(M_{75}\)}] (v5) at (-1.409, -0.513) {};
\node[label={[font=\tiny, inner sep=1pt]below:\(N_{31}\)}] (v6) at (-0.75, -1.299) {};
\node[label={[font=\tiny, inner sep=1pt]below right:\(L_{55}\)}] (v7) at (0.260, -1.477) {};
\node[label={[font=\tiny, inner sep=1pt]right:\(L_{15}\)}] (v8) at (1.149, -0.964) {};
\draw (v0)--(v1)--(v2)--(v3)--(v4)--(v5)--(v6)--(v7)--(v8)--(v0);
\end{tikzpicture}
 & \(C_{13}=M_{53}\) \newline \((C_{13}.M_{51}=1)\)\\
\hline

7 & 
\begin{tikzpicture}[scale=0.45, every node/.style={circle,draw,line width=0.5pt,inner sep=1pt,minimum size=3.5pt}]
\node[label={[font=\tiny, inner sep=1pt]above:\(M_{71}\)}] (A) at (0,1.2) {};
\node[label={[font=\tiny, inner sep=1pt]above right:\(M_{31}\)}] (B) at (0.9,0.7) {};
\node[label={[font=\tiny, inner sep=1pt]above right:\(N_{13}\)}] (C) at (1.1,-0.3) {};
\node[label={[font=\tiny, inner sep=1pt]below right:\(N_{11}\)}] (D) at (0.5,-1.1) {};
\node[label={[font=\tiny, inner sep=1pt]below left:\(M_{55}\)}] (E) at (-0.5,-1.1) {};
\node[label={[font=\tiny, inner sep=1pt]below left:\(L_{55}\)}] (F) at (-1.1,-0.3) {};
\node[label={[font=\tiny, inner sep=1pt]above left:\(M_{77}\)}] (G) at (-0.9,0.7) {};
\draw (A)--(B)--(C)--(D)--(E)--(F)--(G)--(A);
\end{tikzpicture}
 & \(C_7=L_{13}\) \newline \((C_7.M_{71}=1)\)& 
14 & 
\begin{tikzpicture}[scale=0.45, every node/.style={circle,draw,line width=0.5pt,inner sep=1pt,minimum size=3.5pt}]
\node[label={[font=\tiny, inner sep=1pt]right:\(M_{37}\)}] (v0) at (1.5, 0) {};
\node[label={[font=\tiny, inner sep=1pt]above right:\(N_{55}\)}] (v1) at (1.149, 0.964) {};
\node[label={[font=\tiny, inner sep=1pt]above:\(N_{15}\)}] (v2) at (0.260, 1.477) {};
\node[label={[font=\tiny, inner sep=1pt]above left:\(L_{57}\)}] (v3) at (-0.75, 1.299) {};
\node[label={[font=\tiny, inner sep=1pt]left:\(M_{71}\)}] (v4) at (-1.409, 0.513) {};
\node[label={[font=\tiny, inner sep=1pt]below left:\(M_{75}\)}] (v5) at (-1.409, -0.513) {};
\node[label={[font=\tiny, inner sep=1pt]below:\(N_{31}\)}] (v6) at (-0.75, -1.299) {};
\node[label={[font=\tiny, inner sep=1pt]below right:\(L_{11}\)}] (v7) at (0.260, -1.477) {};
\node[label={[font=\tiny, inner sep=1pt]right:\(L_{15}\)}] (v8) at (1.149, -0.964) {};
\draw (v0)--(v1)--(v2)--(v3)--(v4)--(v5)--(v6)--(v7)--(v8)--(v0);
\end{tikzpicture}
 & \(C_{14}=M_{17}\) \newline\((C_{14}.M_{37}=1)\)\\
\hline

8 & 
\begin{tikzpicture}[scale=0.45, every node/.style={circle,draw,line width=0.5pt,inner sep=1pt,minimum size=3.5pt}]
\node[label={[font=\tiny, inner sep=1pt]above:\(L_{11}\)}] (L11) at (0,0) {};
\node[label={[font=\tiny, inner sep=1pt]above right:\(L_{13}\)}] (L13) at (1.2,0.8) {};
\node[label={[font=\tiny, inner sep=1pt]below right:\(M_{33}\)}] (M33) at (1.2,-0.8) {};
\node[label={[font=\tiny, inner sep=1pt]left:\(M_{77}\)}] (M77) at (-1.2,0) {};
\node[label={[font=\tiny, inner sep=1pt]above left:\(M_{17}\)}] (M17) at (-2.2,0.8) {};
\node[label={[font=\tiny, inner sep=1pt]below left:\(M_{75}\)}] (M75) at (-2.2,-0.8) {};
\draw (L11)--(L13); \draw (L11)--(M33);
\draw (L11)--(M77);
\draw (M77)--(M17); \draw (M77)--(M75);
\end{tikzpicture}
 & \(C_8=L_{73}\) \newline \((C_8.L_{13}=1)\)& 
15 & 
\begin{tikzpicture}[scale=0.45, every node/.style={circle,draw,line width=0.5pt,inner sep=1pt,minimum size=3.5pt}]
\node[label={[font=\tiny, inner sep=1pt]below:\(N_{75}\)}] (N75) at (0,0) {};
\node[label={[font=\tiny, inner sep=1pt]above right:\(M_{13}\)}] (M13) at (1.2,0.8) {};
\node[label={[font=\tiny, inner sep=1pt]below right:\(R_{1}\)}] (R1) at (1.2,-0.8) {};
\node[label={[font=\tiny, inner sep=1pt]above:\(M_{31}\)}] (M31) at (-1.2,0) {};
\node[label={[font=\tiny, inner sep=1pt]above:\(L_{75}\)}] (L75) at (-2.4,0) {};
\node[label={[font=\tiny, inner sep=1pt]above right:\(L_{15}\)}] (L15) at (-3.6,1.1) {};
\node[label={[font=\tiny, inner sep=1pt]below right:\(R_2\)}] (R2) at (-3.6,-0.8) {};
\draw (N75)--(M13); \draw (N75)--(R1); \draw (N75)--(M31);
\draw (M31)--(L75);
\draw (L75)--(L15); \draw (L75)--(R2);
\end{tikzpicture}
 & \(C_{15}=L_{57}\) \newline \((C_{15}.M_{13}=1)\)\\
\hline

9 & 
\begin{tikzpicture}[scale=0.45, every node/.style={circle,draw,line width=0.5pt,inner sep=1pt,minimum size=3.5pt}]
\node[label={[font=\tiny, inner sep=1pt]above:\(L_{35}\)}] (A) at (0,1) {};
\node[label={[font=\tiny, inner sep=1pt]above right:\(M_{13}\)}] (B) at (0.866,0.5) {};
\node[label={[font=\tiny, inner sep=1pt]below right:\(N_{57}\)}] (C) at (0.866,-0.5) {};
\node[label={[font=\tiny, inner sep=1pt]below:\(M_{75}\)}] (D) at (0,-1) {};
\node[label={[font=\tiny, inner sep=1pt]below left:\(L_{53}\)}] (E) at (-0.866,-0.5) {};
\node[label={[font=\tiny, inner sep=1pt]above left:\(N_{11}\)}] (F) at (-0.866,0.5) {};
\draw (A)--(B)--(C)--(D)--(E)--(F)--(A);
\end{tikzpicture}
 & \(C_9=L_{37}\) \newline \((C_9.L_{35}=1)\)& 
16 & 
\begin{tikzpicture}[scale=0.45, every node/.style={circle,draw,line width=0.5pt,inner sep=1pt,minimum size=3.5pt}]
\node[label={[font=\tiny, inner sep=1pt]below:\(M_{55}\)}] (M55) at (0,0) {};
\node[label={[font=\tiny, inner sep=1pt]above right:\(L_{55}\)}] (L55) at (1.2,0.8) {};
\node[label={[font=\tiny, inner sep=1pt]below right:\(R_{3}\)}] (R3) at (1.2,-0.8) {};
\node[label={[font=\tiny, inner sep=1pt]above:\(L_{33}\)}] (L33) at (-1.2,0) {};
\node[label={[font=\tiny, inner sep=1pt]above:\(N_{35}\)}] (N35) at (-2.4,0) {};
\node[label={[font=\tiny, inner sep=1pt]above right:\(N_{15}\)}] (N15) at (-3.6,1.1) {};
\node[label={[font=\tiny, inner sep=1pt]below right:\(R_{4}\)}] (R4) at (-3.6,-0.8) {};
\draw (M55)--(L55); \draw (M55)--(R3); \draw (M55)--(L33);
\draw (L33)--(N35);
\draw (N35)--(N15); \draw (N35)--(R4);
\end{tikzpicture}
 & \(C_{16}=N_{53}\) \newline \((C_{16}.L_{55}=1)\)\\
\hline
\end{longtable}
\end{center}

\begin{table}[h]
\centering
\renewcommand{\arraystretch}{1.3}
\caption{Automorphisms of the Fermat quartic \(F_4^2\)}\label{tab:16generators}
{\small
\begin{tabular}{|c|p{3.8cm}|c||c|p{2.8cm}|c|}
\hline
No. \( i\) & \text{Description of }\(g_i\) & Order & No. \(i\) & \text{Description of }\(g_i\) & Order \\
\hline
1 & \((x_0:x_1:x_2:x_3) \mapsto (\sqrt{-1}x_1:x_2:x_3:x_0)\) & 4 & 9 & \(\iota_{(Z_{9}, C_{9})}\) & 2 \\

2 & \((x_0:x_1:x_2:x_3) \mapsto (x_1:x_0:x_2:x_3)\) & 2 & 10 & \(\iota_{(Z_{10}, C_{10})}\) & 2 \\

3 & \(f_{(Z_3,C_3,C_3')} \circ \iota_{(Z_3, C_3)}\) & 2 & 11 & \(\iota_{(Z_{11}, C_{11})}\) & 2 \\

4 & \(\iota_{(Z_4, C_4)}\) & 2 & 12 & \(\iota_{(Z_{12}, C_{12})}\) & 2 \\

5 & \(\iota_{(Z_5, C_5)}\) & 2 & 13 & \(\iota_{(Z_{13}, C_{13})}\) & 2 \\

6 & \(f_{(Z_6,C_6,C_6')} \circ \iota_{(Z_6, C_6)}\) & 2 & 14 & \(\iota_{(Z_{14}, C_{14})}\) & 2 \\

7 & \(\iota_{(Z_7, C_7)}\) & 2 & 15 & \(\iota_{(Z_{15}, C_{15})}\) & 2 \\

8 & \(\iota_{(Z_8, C_8)}\) & 2 & 16 & \(\iota_{(Z_{16}, C_{16})}\) & 2 \\
\hline
\end{tabular}

\medskip

In this table, \(\iota_{(Z_{i},C_i)}\) (\(3\le i\le 16\)) is the inversion of the elliptic fibration \(\varphi_i\) of \(F_4^2\) with a reducible fiber \(Z_i\) and a zero section \(C_i\) given in Table \ref{tab:fibrations}. Moreover, \(f_{(Z_i,C_i,C'_i)}\) (\(i=3,6\)) is the translation automorphism of the elliptic fibration \(\varphi_i\) mapping \(C_i\) to \(C'_i\).
}
\end{table}

    Note that the finite subgroup ${\rm Aut}(F_4^2, h)= (\mu_{4}^{4} \rtimes \mathfrak{S}_{4})/\mu_{4}$ of ${\rm Aut}(F_4^2)$ preserving $h$ is generated by the two elements $g_1$ and $g_2$ in Table \ref{tab:16generators}. 
    
    \medskip
    
    Now we are ready to prove Theorem \ref{main1}.
    
    \medskip
    
   {\it Proof of Theorem \ref{main1}.}
  Let $S=S_3={\rm NS}(F_4^2)$. In this proof, we use computer to apply Dirichlet reduction. In the proof of Theorem \ref{main3}, we get an explicit set $\mathcal{S}$ of generators of ${\rm O}'(S)$ with $|\mathcal{S}|=19361$. By applying Dirichlet reduction to $\mathcal{S}$ and the hyperplane class $h$, we obtain a generating set $\mathcal{S}_1$ of ${\rm O}'(S)$ with $|\mathcal{S}_1|=28$. Here the method works quite effectively. Consider the natural maps $p_1: {\rm Aut}(F_4^2)\rightarrow {\rm O}'(S)$ and $p_S:{\rm O}(S)\rightarrow {\rm O}(A_{S},q_{A_S})$. Let $W\subset {\rm O}'(S)$ denote the Weyl group. Then $$W\rtimes {\rm Im}(p_1)=p_S^{-1}((p_S\circ p_1)({\rm Aut}(F_4^2))),$$ and it is a subgroup of ${\rm O}'(S)$ of index $4$. In fact, $p_S$ is surjective by \cite[Theorem 1.14.2]{Ni80}. Note that $|{\rm Im}(p_S)|=|{\rm Im}(p_S|_{{\rm O}'(S)})|$ since $S\cong U\oplus E_8^{\oplus 2}\oplus N$ and $$p_S(-{\rm id}_S)=p_S({\rm id}_{U\oplus E_8^{\oplus 2}}\oplus -{\rm id}_N)\in {\rm Im}(p_S|_{{\rm O}'(S)}),$$ where $N:=(-8)^{\oplus 2}$. Recall that $|{\rm Im}(p_S)|=|{\rm Im}(p_S|_{{\rm O}'(S)})|=|{\rm O}(A_S, q_{A_S})|=16$ and $$|(p_S\circ p_1)({\rm Aut}(F_4^2))|=4.$$ Then $[{\rm O}'(S): W\rtimes {\rm Im}(p_1)]=4$ and $W\rtimes {\rm Im}(p_1)=p_S^{-1}((p_S\circ p_1)({\rm Aut}(F_4^2)))$.
  
  Using \cite[Page 180, Theorem 6.9]{Su82} and $\mathcal{S}_1$, we find an explicit set $\mathcal{S}_2$ of generators of $W\rtimes {\rm Im}(p_1)$ with $|\mathcal{S}_2|=105$. Applying Dirichlet reduction again for $\mathcal{S}_2$ and $h$, we find a generating set $\mathcal{S}_3$ of $W\rtimes {\rm Im}(p_1)$ with $|\mathcal{S}_3|=17$. Sixteen elements in $\mathcal{S}_3$ can be expressed as words in $\mathcal{S}_4:=\{p_1(g_1),...,p_1(g_{16})\}$, and one is the $(-2)$-reflection $r_{[N_{73}]}$ with respect to the class $[N_{73}]$ of the line $N_{73}$.
This implies that $\mathcal{S}_4$ generates ${\rm Im}(p_1)$. Note that direct geometric realization of each element (a $20\times 20$ matrix) in $\mathcal{S}_3\setminus\{r_{[N_{73}]}\}$ seems complicated. To avoid this difficulty, we choose suitable geometric automorphisms, adding one by one if necessary,  which may replace $\mathcal{S}_3\setminus\{r_{[N_{73}]}\}$ as a generating set. The $16$ automorphisms $g_1, ..., g_{16}$ in Table \ref{tab:16generators} work nicely for this purpose. The explicit matrix forms of the isometries in $\mathcal{S}$ and $\mathcal{S}_i$ ($1\le i\le 4$), as well as the expressions of $\mathcal{S}$, $\mathcal{S}_2$, and $\mathcal{S}_3$ in terms of $\mathcal{S}_1$, $\mathcal{S}_3$, and $\mathcal{S}_4$ respectively, can be found in the ancillary file wordsofgenerators.txt.
  
 Since $p_1$ is injective, we conclude that $\{g_1,g_2,...,g_{16}\}$ is a generating set of ${\rm Aut}(F_4^2)$.\qed

\begin{remark}\label{rmk:order4}
Note that any set of generators of ${\rm Aut}(F_4^2)$ must contain at least one element of order $\ge 4$ since the image of the canonical representation ${\rm Aut}(F_4^2)\rightarrow {\rm GL}(H^0(F_4^2, \Omega_{F_4^2}^2))$ is of order $4$ (see the proof of Theorem \ref{main2}).
\end{remark}

\begin{remark}
Our three methods (overlattice structure, fundamental domains of finite-index subgroups and Dirichlet reduction) can be applied to automorphism groups of other K3 surfaces and birational automorphism groups of projective hyperk\"ahler manifolds.
\end{remark}

\end{document}